\newtheorem{theorem}{Theorem}
\newtheorem{corollary}[theorem]{Corollary}
\newtheorem{definition}[theorem]{Definition}
\newtheorem{lemma}[theorem]{Lemma}
\newtheorem{globalClaim}[theorem]{Claim}
\newtheorem{observation}[theorem]{Observation}
\newtheorem{question}[theorem]{Question}
\newtheorem{fact}[theorem]{Fact}
\newtheorem{remark}[theorem]{Remark}
\newtheorem{notation}[theorem]{Notation}
\newcommand{\<}{\langle}
\renewcommand{\>}{\rangle}
\renewcommand{\phi}{\varphi}
\newcommand{\Gdw}{\Leftrightarrow}
\newcommand{\op}[1]{\operatorname{#1}}
\newcommand{\ZFC}{\op{ZFC}}
\newcommand{\SCH}{\op{SCH}}
\newcommand{\degree}{\op{deg}}
\newcommand{\ult}{\op{Ult}}
\newcommand{\on}{\op{On}}
\newcommand{\card}{\op{Card}}
\newcommand{\cof}{\op{cof}}
\newcommand{\pcf}{\op{pcf}}
\newcommand{\mitord}{\op{o}}
\newcommand{\Hull}{\op{Hull}}
\newcommand{\eextend}{\trianglelefteq}
\begin{document}

\title[]{Lower consistency bounds for mutual stationarity with divergent uncountable cofinalities}

\author{Dominik Adolf}
\email{d$\_$adol01@uni-muenster.de}

\author{Sean Cox}
\email{scox9@vcu.edu}
\address{
Department of Mathematics and Applied Mathematics \\
Virginia Commonwealth University \\
1015 Floyd Avenue \\
Richmond, Virginia 23284, USA 
}

\author{Philip Welch}
\email{P.Welch@bristol.ac.uk}

\thanks{This work was partially supported by a grant from the Simons Foundation (\#318467 to Sean Cox).}

\thanks{The first author was supported by Deutsche Forschungsgemeinschaft Research Fellowship AD 469/1-1.}

\subjclass[2010]{2010}

\begin{abstract}
We prove that the upper bounds for the consistency strength of certain instances of mutual stationarity considered by Liu-Shelah~\cite{MR1469093} are close to optimal. We also consider some related and, as it turns out, stronger properties. 
\end{abstract}

\maketitle

\section{Introduction}

Mutual stationarity was originally introduced in \cite{MR1846032} to study saturation properties of non-stationary ideals.

\begin{definition}
 Let $\lambda$ be an ordinal, any ordinal. Let $\<\kappa_i : i < \lambda\>$ be an increasing sequence of regular cardinals, $\bar{\kappa} := \sup\limits_{i < \lambda} \kappa_i$. We say a sequence $\<S_i : i < \lambda\>$, where $S_i \subseteq \kappa_i$ is stationary, is mutually stationary iff the set 
 \[ \{ A \subset \bar{\kappa} \vert \forall i < \lambda: \kappa_i \in A \Rightarrow \sup(A \cap \kappa_i) \in S_i\} \]
 is stationary, i.e. contains a substructure of every structure with countable signature on $\bar{\kappa}$.
\end{definition}

The most remarkable result from the above paper is the $\ZFC$ fact that any sequence of stationary sets all of which concentrate on points of countable cofinalities is mutually stationary, no matter its length. It is also shown that an analog theorem for sets concentrating on cofinality $\omega_1$ can not be proven in $\ZFC$. We do not currently know if it is even consistent, though a lower bound for it's consistency is known (see \cite{MR2267155},\cite{MR2803948}).

However, we are only going to discuss sequences that do not concentrate on a fixed cofinality. We shall also limit ourselves to stationary subsets of the $\aleph_n$'s, $n$ a natural number.

All mutually stationary sequences appearing in the paper will have limit length.

We will mention some prior results involving sets concentrating on countable cofinality to draw some parallels with the results from this paper. We start with this result:

\begin{theorem}[Magidor]
 Let $\<\kappa_i : i < \omega\>$ be an increasing sequence of measurable cardinals. Then there exists a generic extension of the universe $V\left[G\right]$ in which $\kappa_i$ becomes $\aleph_{2{i+1}}$ and the sequence $\<S^2_0,S^3_1,S^4_0,S^5_1,S^6_0,\ldots\>$ is mutually stationary.
\end{theorem}

(Note: We will often use the following notation: $S^m_n := \{ \alpha < \aleph_m \vert \cof(\alpha) = \aleph_n \}$.)

The theorem can be stated more generally, the real limitations being that all but finitely many sets in the sequence concentrate on one of two cofinalities, $\omega$ being one of them, and if a set in the sequence concentrates on countable cofinality then the next one does not. Here the points in the sequence concentrating on countable cofinality correspond to former measurable cardinals and their successors are not collapsed in the construction and correspond to points concentrating on the other cofinality.

If one wants to do away with this non-accumulation property of points concentrating on countable cofinality one uses supercompact cardinals instead \footnote{Successors of supercompacts might be collapsed in this construction, this being a classic use of indestructability.}. In that case for any given $f:\omega \rightarrow 2$ in the ground model there exists a generic extension in which the sequence $\vec{S}^f_2 := \<S^m_{f(m)}: 2 \leq m < \omega\>$ is mutually stationary (see \cite{MR2250537}).

This result can be improved using a competely different approach. Jensen has shown the consistency of a forcing axiom (relative to one supercompact) that implies the mutual stationarity of $\vec{S}^f_2$ for all $f:\omega \rightarrow 2$ simultaneously. (See \cite{Jensen_FA_CH}.)

The Magidor result, too, can be improved:

\begin{theorem}[Koepke]
Let $\kappa$ be a measurable cardinal. Then there exists a generic extension of the universe $V\left[G\right]$ in which $\kappa$ becomes $\aleph_\omega$ and the alternating sequence $\<S^2_0,S^3_1,S^4_0,S^5_1,S^6_0,\ldots\>$ is mutually stationary.
\end{theorem}

It is not hard to see that this is optimal. There is an interesting switch that happened here. In the Koepke result different limitations apply: all but finitely many sets in the sequence concentrate on one of two cofinalities, $\omega$ being one of them, and if a set in the sequence concentrates on the \emph{other} cofinality then the next one does not. Here the points in the sequence concentrating on the \emph{other} cofinality correspond to points in a Prikry sequence and their successors are not collapsed in the construction and correspond to points concentrating on \emph{countable} cofinality. (See \cite{MR2276663}.)

This leads us to ask the following question. Is it possible to force the mutual stationarity of the sequence $\<S^2_0,S^3_1,S^4_1,S^5_0,S^6_1,S^7_1,\ldots\>$ from finitely many measurable cardinals?

From now on, all sets in a mutually stationary sequence will concentrate on points of uncountable cofinality. The following result is an analog to Magidor's result above.

\begin{theorem}[Liu-Shelah]\label{thm_LiuShelah_1}
 Let $1 \leq m < k$ be natural numbers. Let $A \subset \omega$ be infinite s.t. 
 \[ n \in A \Rightarrow n + 1 \notin A\]
 for all $n < \omega$. Let $f:\omega \rightarrow \{n,k\}$ be defined by 
 \[ f(n) :=\begin{cases} m & n \in A \\ k & n \notin A.\end{cases} \]  
 Let $\<\kappa_i : i < \omega\>$ be an increasing sequence of cardinals of Mitchell order at least $\omega_m + 1$. Then there exists a generic extension in which $\<\kappa_i : i < \omega\>$ is the increasing enumeration of $\<\aleph_n : n > k, n \in A\>$ and the sequence $\vec{S}^f_{k +1} := \< S^n_{f(n)} : k < n < \omega\>$ is mutually stationary.
\end{theorem}

(There was a significantly weaker precursor result in \cite{KLiuOldPaper}, but it has been superceded by this one from \cite{MR1469093}.)

We do not know about a higher level analog to the Koepke result, but we think that it should exist. 

The Liu-Shelah paper \cite{MR1469093} has another result, one which is nominally very powerful.

\begin{theorem}[Liu-Shelah]\label{PCF}
 Assume $\max(\pcf(\{\aleph_n: n <\omega\})) = \aleph_{\omega + n^*}$. Let $1 < m^* < \omega$. Let $I$ be the ideal of finite subsets of $\omega$. Let $\<A_i:i < n^*\>$ be a partition of $\omega$ such that $\prod\limits_{k \in A_i} \aleph_k \slash I$ has true cofinality $\aleph_{\omega + {i + 1}}$ for $i < n^*$. Let $\<n_i: i < n^*\> \subset \left[1,m^*\right]$ be arbitrary. Define a function $f:\omega \rightarrow \omega$ by
 \[ f(n) = n_i :\Gdw n \in A_i. \]
 Then the sequence $\vec{S}^f_{m^* + 1}$ is mutually stationary.
\end{theorem}

Note that the requirement here is the failure of $\SCH$ at $\aleph_\omega$. So, we are still below $0^\P$. We are interested to know if this theorem can be used to generate mutually stationary sequences not already covered by Theorem \ref{thm_LiuShelah_1}. For that end we do need the ability to control for the partition $\<A_i:i < n^*\>$\footnote{Doing so might necessitate large cardinals beyond $0^\P$.}. Unfortunately, we do not know how to do that. (See also Question \ref{question_5}.)

We now state the main results of this paper.  Theorem \ref{thm_MainTheorem} shows that the upper bounds obtained by Liu-Shelah in Theorem \ref{thm_LiuShelah_1} are close to optimal:

\begin{theorem}\label{thm_MainTheorem}
Let $1 < m$ be a natural number. Suppose $\langle S_n \ | \ n \ge m + 1 \rangle$ is a mutually stationary sequence such that:
\begin{enumerate}
 \item  for every $n \ge m + 1$, $S_n$ is stationary in $\omega_n$ and concentrates on a fixed cofinality $\mu_n$; 
 
 \item $\langle \mu_n \ |\ n \ge m + 1 \rangle$ is \textbf{not} eventually constant; and
 \item $\omega_1 \le \mu^*:= \liminf_{n \to \infty} \mu_n < \aleph_\omega$.
\end{enumerate}

Then there is an inner model $W$ such that: for infinitely many $n \in \omega$:
\[
V \models \big\{ \alpha < \omega_n \ | \ o^W(\alpha) \ge \mu^* \big\} \text{ is stationary in } \omega_n
\]

\end{theorem}

The hypotheses of Theorem \ref{thm_MainTheorem} are consistent, by the Liu-Shelah Theorem \ref{thm_LiuShelah_1}.  For example, mutual stationarity of the sequence
\begin{equation*}
\langle S^3_2, S^4_1, S^5_2, S^6_1, \dots, S^{2k-1}_2, S^{2k}_1, \dots \rangle
\end{equation*}
falls under the hypothesis of Theorem \ref{thm_MainTheorem} (with $\liminf_{n \to \infty} \mu_n = \omega_1$).

We do not know if the hypotheses of these following theorems is consistent. Theorem \ref{thm_MainTheorem_two} and Theorem \ref{thm_MainTheorem_three} have analogs in the countable case, mentioned in the introduction, which we do know to be consistent. Therefore we are confident that these hypotheses will be found to be consistent in the end. We are less confident about Theorem \ref{thm_MainTheorem_four}, but will include it anyway as it presents only a minimal time investment.

Furthermore, these hypotheses cover the most obvious variations of the hypothesis of our main theorem, Theorem \ref{thm_MainTheorem}. We feel the paper would be incomplete without addressing them.

\begin{theorem}\label{thm_MainTheorem_two}
Assume $0^\P$ does not exist. Fix natural numbers $l > 1, m > 1$. Suppose $\langle S_n \ | \ n \ge m + 1 \rangle$ is a sequence such that for every $n \ge m + 1$:
\begin{enumerate}
 \item $S_n$ is stationary in $\omega_n$ and concentrates on a fixed uncountable cofinality $\mu_n$; and there exists a strictly increasing sequence $\langle n_k : k < \omega \rangle$ with
 \item $n_{k+1} \geq n_k + l$ for all $k < \omega$
 \item $\langle \mu_{n_k} \ |\ k < \omega \rangle$ is not eventually constant 
 \item $\mu_{n_k} = \mu_{n_k + i}$ for all $k < \omega$ and $i < l$
 \item $\langle S_n \ | \ n \ge m + 1 \rangle$ is mutually stationary.
\end{enumerate}
Then in $K$ there is an infinite sequence $\<\kappa_n: n < \omega\> \subset \{\aleph_n : n < \omega\}$ s.t for all $n < \omega$ there is $\kappa < \kappa_n$ s.t $(\kappa^+)^K < \kappa_n$ and $\mitord^K(\kappa) \geq (\kappa_n)^{+(l-1)}$.
\end{theorem}

\begin{theorem}\label{thm_MainTheorem_three}
Let $1 \leq n,k < m < \omega$ and assume that the sequence $\langle S^{n+m}_{f(n)} : n < \omega\> \rangle$ is mutually stationary for all $f:\omega \rightarrow \{n,k\}$. Then $0^\P$ exists.
\end{theorem}

By the results of Liu-Shelah mentioned in Theorem \ref{thm_LiuShelah_1}, our Theorem \ref{thm_MainTheorem} is almost an equiconsistency.  However, if we alter the assumption of Theorem \ref{thm_MainTheorem} to require that $\liminf_{n \to \infty} \mu_n = \aleph_\omega$, the consistency strength jumps considerably, as shown by the following Theorem \ref{thm_MainTheorem_four}.  In fact, the hypotheses of Theorem \ref{thm_MainTheorem_four} is an apparent strengthening of stating that $\aleph_\omega$ is a Jonsson cardinal, which is not known to be consistent at all.

\begin{theorem}\label{thm_MainTheorem_four}
Fix $1 \leq m < \omega$. Suppose $\langle S_n \ | \ n \ge m \rangle$ is a mutually stationary sequence such that for every $n \ge m$:
\begin{enumerate}
 \item $S_n$ is stationary in $\omega_n$ and concentrates on a fixed cofinality $\mu_n$;
 \item $\liminf_{n \to \infty} \mu_n = \aleph_\omega$.\footnote{Equivalently, any given cofinality appears only boundedly often in $\<\mu_n: n \ge m\>$.}
\end{enumerate} 
Then $0^\P$ exists.
\end{theorem}

\section{Preliminaries}\label{sec_Prelims}

\subsection{Inner model theory}

Unless otherwise stated, we follow the conventions of Zeman~\cite{ZemanBook}, assume that $0^\P$ does not exist, and let $K$ denote the core model (see Chapter 8 of \cite{ZemanBook}).  Like \cite{ZemanBook}, we use Jensen indexing of extenders.  We will heavily depend on the following lemma.

\begin{lemma}\label{lem_teclemma}
 Let $M$ be a premouse. Let $n$ and $\kappa$ be such that $M$ is $n + 1$-sound above $\kappa$. Assume that $\lambda \in M$ is such that 
 \[ \kappa < \lambda \leq \rho^M_n \]
 and $\cof^M(\lambda) > \kappa$. Then $\cof^V(\lambda) = \cof^V(\rho^M_n)$.
\end{lemma}

\begin{proof}
 We can assume that $n = 0$, otherwise replace $M$ by its $n$-th reduct. Define $f:\on \cap M \rightarrow \lambda$ by 
 \[ \xi \mapsto \sup(\Hull^{M || \xi}_1(\kappa \cup \{p^M_1\}) \cap \lambda). \]
 By assumption this is well-defined and cofinal. It is also clearly increasing. Hence, we are done.
\end{proof}

We will need the following basic fact about normal fine-structural iterations.
\begin{fact}[See Lemma 4.2.2 of \cite{ZemanBook}]\label{fact_DegreesStabilize}
 Suppose $\langle M_i \ | \ i \le \theta \rangle$ is a normal fine-structural iteration of a premouse $M = M_0$.  Let $\kappa_i$ denote the critical point of the $i$-th stage.  Assume that the ultimate projectum of $M_0$ is $\le \kappa_0$.   Then for every $i < \theta$, the ultimate projectum of $M_i$ is $\le \kappa_i$.   Let $\text{deg}(M_i,\kappa_i)$ denote the maximal $n \in \omega$ such that $\kappa_i < \omega \rho^M_n$.   If $\theta$ is a limit ordinal, then $\langle \text{deg}(M_i,\kappa_i) \ | \ i < \theta \rangle$ is eventually constant. 
\end{fact}

\subsection{Facts about mutual stationarity}\label{sec_Prelim_MutualStat}

The following lemma will be used to modify the members of sets witnessing mutual stationarity:
\begin{lemma}\label{lem_AddOrdsToBottom}
Suppose $\langle S_n \ | \ n \ge n_0 \rangle$ is a sequence such that  $S_n$ is a stationary subset of $\omega_n$ for every $n \ge n_0$.  Fix an algebra $\mathfrak{A} = (H_{\aleph_{\omega+1}}, \in, \dots)$ and assume that $X \prec \mathfrak{A}$ and $\text{sup}(X \cap \omega_n) \in S_n$ for every $n \ge n_0$.  Fix a regular uncountable $\mu < \aleph_\omega$ and set
\begin{equation*}
X':=  \text{Sk}^{\mathfrak{A}}\big( X \cup \mu \big)
\end{equation*}

Then for all $n$ such that $\omega_n > \mu$: 
\begin{equation*}
\text{sup}(X' \cap \omega_n) = \text{sup}(X \cap \omega_n) 
\end{equation*}
\end{lemma}
\begin{proof}
The $\ge$ direction is trivial.  For the $\le$ direction, fix an $n$ such that $\mu < \omega_n$.  Let $\eta$ be an element of $\omega_n \cap X'$.  Then there is a function $f \in X$ and an ordinal $\xi < \mu$ such that $\eta = f(\xi)$.  Let $h$ be the restriction of $f$ to those inputs from $\mu$ whose outputs are below $\omega_n$.  Since $\mu$ is among the $\aleph_k$'s then $\mu \in X$, and so since $f \in X$ it follows that $h \in X$.  Since $\omega_n$ is regular and $> \mu$ then $\text{sup}(\text{range}(h) ) \in X \cap \omega_n$.  Thus $\eta = f(\xi) = h(\xi) < \text{sup}( X \cap \omega_n)$.
\end{proof}

\begin{corollary}\label{cor_ReplaceBottom}
Suppose $\vec{S} = \langle S_n \ | \ n \ge n_0 \rangle$ is mutually stationary, where $S_n \subset \omega_n$ for each $n \ge n_0$.  Let $\mu < \aleph_\omega$ be fixed, and let $n_1$ be such that $\mu < \omega_{n_1}$.  Then the mutual stationarity of $\langle S_n \ | \ n \ge n_1 \rangle$ is witnessed by models which contain $\mu$ as a subset.  
\end{corollary}

The following lemma can be easily proved by induction on $n$:
\begin{lemma}\label{lem_Unif_Implies_Cover}
Assume $\mu < \aleph_\omega$ is regular, $\mu \subset X \prec H_{\aleph_{\omega+1}}$, and $\text{sup}(X \cap \omega_n)$ has cofinality $\ge \mu$ whenever $\omega_n \ge \mu$.  Then for every such $n$, every $<\mu$-sized subset of $X \cap \omega_n$ is covered by some $<\mu$-sized set from $X$. In particular, $X \cap \aleph_\omega$ is a $<\mu$-closed set of ordinals.
\end{lemma}

\section{Proof of Theorem \ref{thm_MainTheorem}}

In this section we prove Theorem \ref{thm_MainTheorem}.  Define
\begin{equation*}
\mu^*:= \liminf_{n \to \infty} \mu_n
\end{equation*}

Recall we are assuming that
\begin{equation}\label{eq_MuStarLessOmega}
\mu^* < \aleph_\omega
\end{equation}

\begin{remark}
The case where $\mu^* = \aleph_\omega$ is Theorem \ref{thm_MainTheorem_four}.  However, unlike the assumptions of Theorem \ref{thm_MainTheorem}, the assumptions of Theorem \ref{thm_MainTheorem_four} are not known to be consistent.
\end{remark}

As described in Section \ref{sec_Prelims}, we work with the core model $K$ below $0^\P$.\footnote{If $0^\P$ exists then by iterating $0^\P$ one easily obtains an inner model as in the conclusion of Theorem \ref{thm_MainTheorem}.}

First we state a couple of theorems which are proved in  \cite{CoxCoveringPaper}:
\begin{theorem}[\cite{CoxCoveringPaper}, Lemma 44]\label{thm_MitchellTheorem}
Let $K$ be the core model below 0-pistol and $\lambda$ an uncountable cardinal.  Assume $S$ is a stationary collection of $X \prec H_{\lambda}$ such that 
\begin{equation*}
\text{cof}(\omega) \cap \lambda \cap \text{lim}(X \cap \lambda)  \subset X
\end{equation*}
For each $X \in S$ let $\sigma_X: H_X \to X \prec H_\lambda$ be the inverse of the Mostowski collapse of $X$, and let $K_X:= \sigma_X^{-1}[K\cap H_\lambda]$.  Then for all but nonstationarily many $X \in S$, in the coiteration of $K$ with $K_X$:
\begin{itemize}
 \item The $K$ side truncates to a mouse of size at most $|\text{crit}(\sigma_X)|$ by stage 1 of the coiteration;
 \item the $K_X$ side of the coiteration is trivial.
\end{itemize} 
\end{theorem}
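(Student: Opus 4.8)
The plan is to run a Mitchell-style coiteration argument. First I would verify that $K_X$ is an iterable premouse that agrees with $K$ low down. Fix $X \in S$ and put $\kappa := \text{crit}(\sigma_X)$, so $\kappa = \min(\text{Ord}\setminus X)$; note that $\kappa$ is a limit ordinal (a successor $\beta+1$ would have $\beta\in X$, whence $\beta+1\in X$), so $\kappa \in \text{lim}(X\cap\lambda)$ automatically. Since $\lambda$ is a cardinal, $K\cap H_\lambda$ is the initial segment of $K$ of height $\lambda$, and $K\cap X$ is, in the language of premice, an elementary substructure of it; its transitive collapse is $K_X = \sigma_X^{-1}[K\cap H_\lambda]$, and $\sigma := \sigma_X\restriction K_X \colon K_X \to K\cap H_\lambda$ is fully elementary with critical point $\kappa$. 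By condensation for $K$ below $0^\P$, $K_X$ is a premouse, and it is iterable since an illfounded iteration tree on $K_X$ would copy via $\sigma$ to one on $K$. Also $K_X$ and $K$ agree below $\kappa$ (as $\sigma\restriction\kappa=\mathrm{id}$), so the least disagreement between them lies at an index $\ge \kappa$.

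Next I would coiterate $K$ against $K_X$, obtaining iteration trees $\mathcal{T}$ on $K$ and $\mathcal{U}$ on $K_X$; the comparison theorem guarantees termination, with one final model an initial segment of the other. The crux is to show that $\mathcal{U}$ is trivial. Suppose not. Analysing the least disagreement, one shows — and this is exactly where the fine structure of $K$ below $0^\P$, in particular the behaviour of extenders overlapping $\kappa$, enters — that a nontrivial $\mathcal{U}$ forces $\text{cof}^V(\kappa)=\omega$. But then $\kappa \in \text{cof}(\omega)\cap\lambda\cap\text{lim}(X\cap\lambda)$, so the hypothesis on $S$ yields $\kappa\in X$, contradicting $\kappa=\min(\text{Ord}\setminus X)$. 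Hence $\mathcal{U}$ is trivial and $K_X$ is an initial segment of the last model of $\mathcal{T}$.

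With $\mathcal{U}$ trivial, all the movement is on the $K$-side. The first extender $E_0$ used in $\mathcal{T}$ is the one sitting at the least disagreement with $K_X$; it overlaps $\kappa$, has critical point some $\mu<\kappa$, and is only partial on $K$, so forming $\text{Ult}(K|\gamma, E_0)$ truncates $K$ to $K|\gamma$ with $\gamma\le(\mu^+)^K$, a structure of cardinality at most $|\mu|\le|\text{crit}(\sigma_X)|$ — this is the truncation ``by stage $1$'' of the first bullet, and the trivial $K_X$-side is the second. Finally, the argument goes through for every $X\in S$ for which $\sigma_X$ interacts correctly with the $K$-construction near $\kappa$ (e.g. for which $\kappa$ is a cardinal of $K$ and no collateral condensation failure occurs below $\kappa$); the $X\in S$ for which it fails can be mapped regressively to a witness of the failure below $\kappa$, so that set is nonstationary by Fodor's lemma, which gives the ``all but nonstationarily many'' clause. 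I expect the step isolating $\text{cof}^V(\kappa)=\omega$ from a nontrivial $\mathcal{U}$ — marrying the below-$0^\P$ fine structure to the overlap at $\kappa$ — to be the main obstacle.
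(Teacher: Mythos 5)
This theorem is quoted from Lemma 44 of the cited covering paper; the present paper contains no proof of it, so your sketch has to stand on its own. At the level of scaffolding it has the right shape (collapse $K\cap X$, check $K_X$ is an iterable premouse via condensation and copying along $\sigma_X$, coiterate, argue the $K_X$-side is trivial and the $K$-side truncates immediately, dispose of exceptional $X$ by a pressing-down argument), but every step that carries real weight is asserted rather than proved, and you flag the central one yourself.

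The main gap is your treatment of triviality of the $K_X$-side. You propose that a nontrivial $\mathcal{U}$ ``forces $\mathrm{cof}^V(\kappa)=\omega$'' for $\kappa=\mathrm{crit}(\sigma_X)$ and then contradict the hypothesis at the single point $\kappa$. But the hypothesis $\mathrm{cof}(\omega)\cap\lambda\cap\mathrm{lim}(X\cap\lambda)\subseteq X$ is not just the statement that $\mathrm{cof}(\kappa)>\omega$: it says $X\cap\lambda$ is closed under countable suprema, and in Mitchell-style covering arguments that closure is used globally, to verify countable completeness of (fragments of) the extender sitting at the least disagreement on the $K_X$-side after realizing it into $V$ via $\sigma_X$ and the iteration maps; maximality of $K$ below $0^\P$ (a countably complete extender cohering with $K$ is already on the $K$-sequence) then contradicts its being a disagreement. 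Your sketch contains no argument that a nontrivial $\mathcal{U}$ yields $\mathrm{cof}(\kappa)=\omega$, and there is no reason the obstruction should localize at $\kappa$: the offending extender can be indexed anywhere up to the height of $K_X$. The first bullet has the same problem: you assert that the extender at the least disagreement overlaps $\kappa$, has critical point $\mu<\kappa$, and is partial on $K$, but a priori the least disagreement is merely at an index $\geq\kappa$, and locating it (and the critical point) low enough that the stage-1 truncation has size at most $|\mathrm{crit}(\sigma_X)|$ is precisely what needs proof. Finally, the ``all but nonstationarily many'' clause cannot be dispatched by a one-line appeal to Fodor: you would have to specify the regressive function, the stationary set it acts on, and why stabilizing its value produces a contradiction; in arguments of this kind that clause emerges from the countable-completeness/realization analysis, not from a generic pressing-down remark. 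As written, both bullets remain unproved.
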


\begin{notation}\label{notation_IterationStuff}
Let $S$ be as in the hypothesis of Theorem \ref{thm_MitchellTheorem}.  For each $X \in S$ we let $\theta_X$ denote the length of the $K$ versus $K_X$ coiteration, and let $\langle N^X_i, \kappa^X_i, E^X_i \ | \ i < \theta_X \rangle$ denote the sequence of mice, critical points, and applied extenders on the $K$ side of the coiteration.\footnote{Recall from Theorem \ref{thm_MitchellTheorem} that the $K_X$ side of the coiteration is trivial.}  For $i \le j < \theta_X$ let $\pi^X_{i,j}$ denote the (possibly partial) iteration map from $N^X_i \to N^X_j$. 
\end{notation}

The following theorem was a generalization of a Covering Theorem of Mitchell:\footnote{E.g.\ it removed all cardinal arithmetic assumptions from the hypotheses.}
\begin{theorem}[Theorem 1 of Cox~\cite{CoxCoveringPaper}]\label{thm_CovThm}
Assume $0^\P$ does not exist, and let $K$ be the core model.  Suppose $\gamma$ is an ordinal, $\gamma > \omega_2$, $\text{cf}(\gamma) < |\gamma|$, and $\gamma$ is regular in $K$.  Then $\gamma$ is measurable in $K$.  Moreover, if $\text{cf}(\gamma) > \omega$ then in $K$, $\gamma$ has Mitchell order at least $\text{cf}^V(\gamma)$.
\end{theorem}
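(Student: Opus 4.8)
The plan is to run a core-model comparison organized around Theorem~\ref{thm_MitchellTheorem}. Write $\tau := \text{cf}^V(\gamma)$ and $\lambda := |\gamma|^V$, so that $\tau < \lambda \le \gamma$ and $\tau^+ \le \lambda$; since $\gamma$ is regular in $K$ it is a cardinal of $K$, and the hypothesis $\gamma > \omega_2$ will enter only to keep the ambient cardinals large enough for the comparison and the smallness estimates below to be meaningful. The goal is to produce, inside $K$, a $\subseteq$-increasing sequence of $\tau$ normal measures on $\gamma$ when $\tau > \omega$ --- i.e.\ to witness that the Mitchell order $o^K(\gamma)$ is at least $\tau$ --- and at least one normal measure on $\gamma$ when $\tau = \omega$; the former subsumes the assertion that $\gamma$ is measurable in $K$.

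First I would choose the hull. Fix a continuous cofinal $f\colon\tau\to\gamma$ and a large regular $\lambda'$. For $\tau>\omega$, build a continuous internally approachable chain $\langle X_i\mid i<\tau\rangle$ of elementary submodels of $H_{\lambda'}$, each of size $\tau$, with $\{K,\gamma,f\}\cup(i{+}1)\subseteq X_i$ and $\langle X_j\mid j\le i\rangle\in X_{i+1}$, and --- in order to secure the closure hypothesis of Theorem~\ref{thm_MitchellTheorem} without cardinal arithmetic --- with $X_{i+1}$ containing every ordinal of the form $\sup(X_i\cap\alpha)$ that has cofinality $\omega$. Put $X:=\bigcup_{i<\tau}X_i$. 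Then $|X|=\tau<\lambda$, $\text{ran}(f)\subseteq X$ so $\sup(X\cap\gamma)=\gamma$, and $X$ is closed under limits of its elements of countable cofinality; hence $X$ belongs to a stationary class to which Theorem~\ref{thm_MitchellTheorem} applies (with its ``$\lambda$'' taken to be our $\lambda'$). Arranging this stationarily often, fix such an $X$ for which the conclusion of that theorem holds and adopt Notation~\ref{notation_IterationStuff}. Let $\kappa:=\text{crit}(\sigma_X)$ and $\bar\gamma:=\sigma_X^{-1}(\gamma)$. Since $\sigma_X(\bar\gamma)=\gamma=\sup\sigma_X[\bar\gamma]$, the map $\sigma_X$ is continuous at $\bar\gamma$, whence $\tau<\kappa<\bar\gamma<\gamma$ and $|\bar\gamma|=\tau$; moreover $\text{cf}^V(\bar\gamma)=\tau$ (the image of $f$ under the collapse is cofinal in $\bar\gamma$), while $\bar\gamma$ is a regular cardinal of $K_X$ because $\gamma$ is regular in $K$.

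Next comes the comparison. By Theorem~\ref{thm_MitchellTheorem} the $K_X$-side of the coiteration of $K$ with $K_X$ is trivial, while the $K$-side truncates, by stage $1$, to a mouse $N^X_1$ of size at most $|\kappa|=\tau$; in particular $N^X_1$ has ordinal height below $\tau^+\le\lambda\le\gamma$, so it is small compared with $\gamma$, and the Dodd--Jensen lemma applies to it. Since we are below $0^\P$ the coiteration is linear, so $K_X$ is the last model of a linear iteration (possibly with further truncations) of $N^X_1$; write $\pi:=\pi^X_{1,\theta_X}\colon N^X_1\to K_X$ for the tail iteration map. Comparing $\sigma_X\circ\pi\colon N^X_1\to K$ against the coiteration of $N^X_1$ with $K$ and invoking Dodd--Jensen pins $\pi$ down, and in particular controls where the critical points $\kappa^X_i$ of the applied extenders $E^X_i$ sit relative to $\bar\gamma$. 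The crux is that the iteration must \emph{cross} $\bar\gamma$: there is a stage $i\ge 1$ with $\pi^X_{i,\theta_X}(\kappa^X_i)=\bar\gamma$. Otherwise $\pi$ would be continuous at $\sup\bigl(\pi^{-1}[\bar\gamma]\bigr)$, and --- since $N^X_1$ is this small and, by coherence of the $K$-sequence, the ordinals below $\bar\gamma$ in $K_X$ would then all be reached cofinally ``from inside $N^X_1$'' --- one could read off a cofinal map into $\bar\gamma$ of $V$-cofinality strictly below $\tau$ (here the closure of $X$, and for $\tau>\omega$ its length-$\tau$ structure, come in), contradicting $\text{cf}^V(\bar\gamma)=\tau$. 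Each such crossing supplies, via $\sigma_X$, a normal measure on $K$'s coherent sequence with critical point $\gamma$; hence $\gamma$ is measurable in $K$. When $\tau>\omega$, the image of $f$ under the collapse gives a cofinal map $\tau\to\bar\gamma$ which, traced through the \emph{linear} iteration $\pi$, forces $\bar\gamma$ to be crossed cofinally often: there is a strictly increasing $\langle i_\xi\mid\xi<\tau\rangle$ with $\pi^X_{i_\xi,\theta_X}(\kappa^X_{i_\xi})=\bar\gamma$ for every $\xi<\tau$. Because the iteration is linear and these extenders are applied one after another --- each using a measure at the then-current image of $\bar\gamma$, after the earlier ones have been removed --- their $\sigma_X$-pullbacks form a $\subseteq$-increasing sequence of $\tau$ normal measures on $\gamma$ in $K$, so $o^K(\gamma)\ge\tau=\text{cf}^V(\gamma)$.

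I expect the real work to be the two italicized claims. First, that the iteration \emph{must} cross $\bar\gamma$: this is where the three hypotheses --- ``$\gamma$ regular in $K$'', ``$\text{cf}^V(\gamma)<|\gamma|$'', and ``$0^\P$ does not exist'' --- genuinely combine, through Dodd--Jensen together with the fine structure and coherence of the $K$-sequence. Second, the bookkeeping that the $\tau$ crossings of $\bar\gamma$ produce $\tau$ \emph{distinct} normal measures, correctly stacked in Mitchell order, rather than one measure being revisited --- this requires tracking how the initial segments of the iterates below the image of $\bar\gamma$ evolve along the iteration and how $\sigma_X$ commutes with the iteration maps. The case $\tau=\omega$ is run the same way, but with $X$ taken merely closed under the cofinality-$\omega$ limit points of $X\cap\lambda'$ (no internal approachability, hence no cardinal arithmetic, but also no leverage on higher Mitchell order): this still forces at least one crossing of $\bar\gamma$, hence one normal measure on $\gamma$ in $K$, which gives measurability.
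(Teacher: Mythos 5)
This statement is not proved in the paper at all: it is imported verbatim as Theorem 1 of \cite{CoxCoveringPaper}, where it is the main result of that paper, so there is no in-paper argument to compare yours against; what you have written has to stand on its own as a proof of a Mitchell-style covering theorem, and it does not. The two steps you yourself flag as ``the real work'' --- that the iteration must cross $\bar\gamma$ (i.e.\ that some $\pi^X_{i,\theta_X}(\kappa^X_i)=\bar\gamma$), and that $\text{cf}(\gamma)$-many crossings yield $\text{cf}(\gamma)$-many measures correctly arranged in the Mitchell order --- are precisely the content of the cited theorem. Asserting that Dodd--Jensen ``pins $\pi$ down'' and that otherwise ``one could read off a cofinal map into $\bar\gamma$ of small cofinality'' is not an argument: the analogous (and easier) statements in this paper, Lemma \ref{lem_CofInBeta_n} and Lemma \ref{lem_thread}, already require the fine-structural cofinality lemma (Lemma \ref{lem_teclemma}), soundness of the iterates above the generators, the truncation analysis \eqref{eq_Truncates}, and the stabilization of degrees, none of which appear in your sketch. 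Likewise, the mechanism by which a crossing produces a measure \emph{in $K$} is skipped: what one actually needs is that the image measure $\pi^X_{i,\theta_X}(E^X_i)$ on $\bar\gamma$ is indexed inside the region where the final model of the coiteration agrees with (end-extends) $K_X$, so that it lies on the $K_X$-sequence and can be transferred to $K$ by elementarity of $\sigma_X$ (pushed forward, not ``pulled back'' --- $\sigma_X(\bar\gamma)=\gamma$).

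There is also a genuine conceptual error in the Mitchell-order part. You propose to witness $o^K(\gamma)\ge\text{cf}(\gamma)$ by a ``$\subseteq$-increasing sequence of normal measures on $\gamma$''; but two distinct ultrafilters on the same set are never $\subseteq$-comparable, so such a sequence cannot exist and in any case would not witness Mitchell order. What is required is a sequence of measures on $\gamma$ each of which appears in the ultrapower by (equivalently, on the coherent sequence below the index of) the later ones --- and showing that the measures arising from distinct crossing stages $j<j'$ are even \emph{distinct} (i.e.\ that the orders $\pi^X_{j,\theta_X}(\nu^X_j)$ take $\text{cf}(\gamma)$-many values, where $\nu^X_j=o^{K_X}(\kappa^X_j)$) is a nontrivial bookkeeping argument that your internally approachable chain is introduced for but never actually used in. The case $\text{cf}(\gamma)=\omega$ is dismissed in one sentence, although the closure hypothesis of Theorem \ref{thm_MitchellTheorem} and the non-triviality of the crossing argument are exactly where that case becomes delicate. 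In short, the proposal correctly identifies the right machinery (hulls satisfying the hypotheses of Theorem \ref{thm_MitchellTheorem}, triviality of the $K_X$-side, truncation to a small mouse, threads through $\bar\gamma$), but everything beyond the setup is either asserted or misstated, so this is an outline of the theorem to be proved rather than a proof of it.
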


We now commence with the proof of Theorem \ref{thm_MainTheorem}.  Fix a large regular $\theta$ and a structure $\mathfrak{A} = (H_\theta, \in, \vec{S}, \dots)$ for the remainder of the proof.  For each $X$ witnessing mutual stationarity of $\vec{S}$, let $\sigma_X: H_X \to X \prec \mathfrak{A}$ be the inverse of the collapsing map of $X$ and let $K_X$ denote $\sigma_X^{-1}[K \cap H_\theta]$.

Recall that we are assuming $\mu^*= \liminf_{n \to \infty} \mu_n < \aleph_\omega$.  By Corollary \ref{cor_ReplaceBottom}, if we let $m_1$ be large enough so that $\omega_{m_1} > \mu^*$, then the mutual stationarity of $\langle S_n \ | \ n \ge m_1 \rangle$ is witnessed by sets containing $\mu^*$ as a subset; let $T$ denote this stationary set.  Lemma \ref{lem_Unif_Implies_Cover}, together with the fact that $\mu_n \ge \mu^*$ for all $n \ge m_1$ and $\mu^* \subset X$  for all $X \in T$, yields:

\begin{observation}\label{obs_OmegaClosedOrds}
For every $X \in T$, $X \cap \aleph_\omega$ is closed under limits of cofinality less than $\mu^*$.  In particular, since Theorem \ref{thm_MainTheorem} assumes that $\mu^* \ge \omega_1$, then $X \cap \aleph_\omega$ is an $\omega$-closed set of ordinals and thus Theorem \ref{thm_MitchellTheorem} applies.
\end{observation}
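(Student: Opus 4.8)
The plan is to read off the stated closure property one cardinal $\omega_n$ at a time from Lemma~\ref{lem_Unif_Implies_Cover}, glue those per-$\omega_n$ facts together into a statement about $X \cap \aleph_\omega$, and then observe that the hypotheses of Theorem~\ref{thm_MitchellTheorem} are now literally in place.

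First I would fix $X \in T$ and $n \ge m_1$. Since $X$ witnesses the mutual stationarity of $\langle S_n \mid n \ge m_1 \rangle$, we have $\sup(X \cap \omega_n) \in S_n$, so $\text{cf}\big(\sup(X \cap \omega_n)\big) = \mu_n$, and $\mu_n \ge \mu^*$ because $n \ge m_1$; moreover $\mu^* \subseteq X$ by the construction of $T$ (Corollary~\ref{cor_ReplaceBottom}). These are exactly the hypotheses under which Lemma~\ref{lem_Unif_Implies_Cover} applies, and it yields that $X \cap \omega_n$ is closed under limits of cofinality less than $\mu^*$: every $\gamma < \omega_n$ with $\text{cf}(\gamma) < \mu^*$ and $\sup(X \cap \gamma) = \gamma$ belongs to $X$.

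Next I would glue these together. Let $\gamma < \aleph_\omega$ be any limit point of $X \cap \aleph_\omega$ with $\text{cf}(\gamma) < \mu^*$, and choose $n \ge m_1$ with $\gamma < \omega_n$. Since $\gamma < \omega_n \le \aleph_\omega$, the three sets $X \cap \gamma$, $(X \cap \omega_n) \cap \gamma$, $(X \cap \aleph_\omega) \cap \gamma$ coincide, so $\gamma$ is also a limit point of $X \cap \omega_n$, and the previous paragraph gives $\gamma \in X$. This proves the first assertion of the Observation. The ``in particular'' is then immediate: Theorem~\ref{thm_MainTheorem} assumes $\mu^* \ge \omega_1$, hence $\omega < \mu^*$, so the closure just proved includes all $\omega$-cofinal limit points; that is, $\text{cof}(\omega) \cap \aleph_\omega \cap \text{lim}(X \cap \aleph_\omega) \subseteq X$ for every $X \in T$, which is precisely the hypothesis of Theorem~\ref{thm_MitchellTheorem} with $\lambda = \aleph_\omega$ and $S = T$ (after the routine identification of each $X \prec \mathfrak{A}$ with its trace $X \cap H_{\aleph_\omega} \prec H_{\aleph_\omega}$, which has the same intersection with $\aleph_\omega$ and induces the same collapsed mouse below $H_{\aleph_\omega}$).

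I do not expect a genuine obstacle here; the argument is bookkeeping around Lemma~\ref{lem_Unif_Implies_Cover}. The two points that need care are the gluing step — where the key remark is that cofinal approximations to a $\gamma < \aleph_\omega$ eventually lie inside a single $\omega_n$, so the local closure at $\omega_n$ is enough — and the innocuous-looking but essential appeal to the standing hypothesis $\mu^* \ge \omega_1$, without which one would obtain only closure under limits of uncountable cofinality and Theorem~\ref{thm_MitchellTheorem} would not apply.
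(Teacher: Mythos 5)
Your proposal is correct and follows the paper's own (one-line) justification exactly: apply Lemma \ref{lem_Unif_Implies_Cover} using $\mu_n \ge \mu^*$ for $n \ge m_1$ and $\mu^* \subseteq X$, then note that $\mu^* \ge \omega_1$ gives $\omega$-closure so Theorem \ref{thm_MitchellTheorem} applies. The extra bookkeeping you supply (gluing the per-$\omega_n$ closure into closure of $X \cap \aleph_\omega$, and the identification needed to invoke Theorem \ref{thm_MitchellTheorem}) is just an expansion of what the paper leaves implicit.
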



For $X \in T$ let $\beta^X_\omega:= \sigma_X^{-1}(\aleph_\omega)$.  By Observation \ref{obs_OmegaClosedOrds} and Theorem  \ref{thm_MitchellTheorem}, for every $X \in T$ the following facts hold for the coiteration of $K$ with $K_X ||\beta^X_\omega$:
\begin{equation}\label{eq_K_X_side_trivial}
\text{ the $K$ versus $K_X || \beta^X_\omega$ coiteration is trivial on the $K_X || \beta^X_\omega$ side} 
\end{equation}
and
\begin{equation}\label{eq_Truncates}
K \text{ truncates to a mouse of size at most } |\text{crit}(\sigma_X)| \text{ by stage 1}
\end{equation}



For each $X \in T$ and $n > m_1$ let
\begin{equation*}
\beta^X_n:= \sigma_X^{-1}(\omega_n) 
\end{equation*}
Since $\text{cf}(X \cap \omega_n) = \mu_n$ for all $n > m_1$, then  $\text{cf}^V(\beta^X_n) = \mu_n$.  So the assumptions of the theorem imply that for every $X \in T$:
\begin{equation}\label{eq_Vec_beta_not_Constant}
\langle \text{cf}^V(\beta^X_n) \ | \ n >m_1 \rangle \text{ is not eventually constant}
\end{equation}

Let $\theta_X$ denote the length of the coiteration of $K$ with $K_X || \beta^X_\omega$; equivalently, $\theta_X$ is the least stage of the $K$ versus $K_X$ coiteration such that all disagreements below $\beta^X_\omega$ have been resolved.  Let $\langle N^X_i, \kappa^X_i, \nu^X_i \ | \  i < \theta_X \rangle$ denote the mice, critical point, and iteration index of the mouse on the $K$-side of the coiteration of $K$ with $K_X || \beta_\omega^X$.  Note that by \eqref{eq_K_X_side_trivial} it follows that for all $i < \theta_X$:
\begin{equation}
\nu^X_i = \text{o}^{K_X}(\kappa^X_i)
\end{equation} 

The following argument is due to Magidor:
\begin{lemma}[Magidor~\cite{MR939805}]\label{lem_Magidor}
For every $X \in T$:
\begin{equation*}
\{ \kappa^X_i \ | \ i < \theta^X \} \cap \beta^X_\omega \text{ is cofinal in } \beta^X_\omega
\end{equation*}
\end{lemma}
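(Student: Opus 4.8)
The plan is to argue by contradiction, reducing the statement to an impossibility about a ``small'' iterate on the $K$-side.

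So suppose $\{\kappa^X_i \mid i < \theta_X\} \cap \beta^X_\omega$ is bounded below $\beta^X_\omega$, and fix $\gamma < \beta^X_\omega$ with $\kappa^X_i < \gamma$ whenever $\kappa^X_i < \beta^X_\omega$. Since the critical points are nondecreasing along the coiteration, there is a least $i_0 \le \theta_X$ with $\kappa^X_i \ge \beta^X_\omega$ for all $i \in [i_0,\theta_X)$, and then $\kappa^X_i < \gamma$ for every $i < i_0$. Fix $n > m_1$ large enough that $\gamma < \beta^X_n$, that $|N^X_1| < \beta^X_n$ — possible because $X \cap \aleph_\omega$ is a proper subset of $\aleph_\omega$ (as $\text{cf}(X \cap \omega_k) = \mu_k < \omega_k$ for $k > m_1$), so $\text{crit}(\sigma_X) < \aleph_\omega$ and hence $|N^X_1| \le |\text{crit}(\sigma_X)| < \aleph_\omega$ by \eqref{eq_Truncates} — and that $\sup\{\nu^X_i \mid i < i_0\} < \beta^X_n$, which is possible since $\nu^X_i = \text{o}^{K_X}(\kappa^X_i) < (\kappa^X_i)^{++} < \gamma^{++} < \beta^X_\omega$ (the middle inequality because $K$ below $0^\P$ satisfies $\mathrm{GCH}$ and has no overlapping extenders, so the Mitchell order is bounded locally) and $\langle \beta^X_m \mid m > m_1\rangle$ is cofinal in $\beta^X_\omega$.

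The first step is to establish that $N^X_{i_0} || \beta^X_\omega = K_X || \beta^X_\omega$ and $\text{ht}(N^X_{i_0}) \ge \beta^X_\omega$. Indeed, by \eqref{eq_K_X_side_trivial} the $K_X || \beta^X_\omega$ side of the coiteration is trivial, and by universality of $K$ the coiteration terminates with $K_X || \beta^X_\omega$ an initial segment of the final $K$-side model $N^X_{\theta_X}$; so $N^X_{\theta_X} || \beta^X_\omega = K_X || \beta^X_\omega$ and $\text{ht}(N^X_{\theta_X}) \ge \beta^X_\omega$. On the other hand every extender applied at a stage $\ge i_0$ has critical point $\ge \beta^X_\omega$, hence adds no bounded subset of $\beta^X_\omega$, collapses nothing below $\beta^X_\omega$, and its iteration map is the identity on $\beta^X_\omega$; so $N^X_{i_0} || \beta^X_\omega = N^X_{\theta_X} || \beta^X_\omega$ and $\text{ht}(N^X_{i_0}) \ge \beta^X_\omega$ as well. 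Consequently $N^X_{i_0}$ is a linear iterate of the mouse $N^X_1$, of size $< \beta^X_n$, via extenders all of whose critical points lie below $\gamma$ and all of whose lengths are below $\beta^X_n$. Moreover, for each $m$ with $\gamma < \beta^X_m < \beta^X_\omega$, the ordinal $\beta^X_m$ is regular in $K_X$ (because $\omega_m$ is regular in $K$), and since $\beta^X_m < (\beta^X_m)^{+K_X} \le \beta^X_{m+1} < \beta^X_\omega$ — so that every subset of $\beta^X_m$ in $N^X_{i_0}$ already occurs below the common level $(\beta^X_m)^{+K_X} = (\beta^X_m)^{+N^X_{i_0}}$, by acceptability — $\beta^X_m$ is in fact regular in $N^X_{i_0}$ itself.

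The crux is then to derive a contradiction from the existence of such an iterate: a linear iterate of a mouse of size $< \beta^X_n$ via extenders confined below $\gamma$ cannot turn cofinally (in $\beta^X_\omega$) many of the $\beta^X_m$ into regular cardinals with the prescribed $V$-cofinalities $\text{cf}^V(\beta^X_m) = \mu_m$. Each such $\beta^X_m$ is singular in $V$ (as $\mu_m < \beta^X_m$) yet regular in $N^X_{i_0}$ and lies above every critical point and generator of the confined iteration, so it must be the image, or a supremum of images, of an ordinal of the fixed mouse $N^X_1$ under the fixed embedding $\pi^X_{1,i_0}$; analysing the continuity of $\pi^X_{1,i_0}$ there, and using Theorem \ref{thm_CovThm} to control the Mitchell orders that appear (the cofinalities $\mu_m \ge \omega_1$ being uncountable), one reads off that $\langle \text{cf}^V(\beta^X_m)\rangle$ would have to be eventually determined by $N^X_1$, $\gamma$ and the iteration, hence eventually constant — contradicting \eqref{eq_Vec_beta_not_Constant}.

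I expect this last paragraph to be the main obstacle: carrying out the fine-structural bookkeeping that pins down precisely which $V$-cofinalities a linearly iterated small mouse, with all critical points below $\gamma$, can assign to its regular cardinals above $\gamma$, and verifying that this constraint is incompatible with the non-eventually-constant sequence $\langle \mu_m \rangle$. Everything preceding it — the reduction to $N^X_{i_0} || \beta^X_\omega = K_X || \beta^X_\omega$ and the identification of the $\beta^X_m$ as regular cardinals of a confined iterate of $N^X_1$ — should be routine given Theorems \ref{thm_MitchellTheorem} and \ref{thm_CovThm}.
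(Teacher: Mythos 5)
There is a genuine gap, and it sits exactly where you flag it: the final paragraph, which is the entire content of the lemma, is left as a hope rather than an argument. The paper closes it with a specific fine-structural mechanism that your sketch does not contain: letting $\eta_X < \beta^X_\omega$ be the supremum of the critical points, the stage-one truncation \eqref{eq_Truncates} implies (by a routine induction along the iteration) that the last model $M^X_{\theta_X}$ \emph{projects below} $\eta_X$ and is \emph{sound above} $\eta_X$; one then passes to the maximal initial segment $M$ of $M^X_{\theta_X}$ in which $\beta^X_\omega$ is still a cardinal (either $M$ is everything, or it is a proper initial segment which is fully sound and projects below $\beta^X_\omega$ because $\beta^X_\omega$ is definably collapsed over it), obtains $n^*$ with $\rho^M_{n^*+1} \le \beta^X_k < \beta^X_\omega \le \rho^M_{n^*}$ for large $k$, checks by acceptability that the $\beta^X_k$ are regular in $M$, and then invokes Lemma \ref{lem_teclemma} to conclude $\cof^V(\beta^X_k) = \cof^V(\rho^M_{n^*})$ for all large $k$ — a single fixed value, contradicting \eqref{eq_Vec_beta_not_Constant}. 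Your proposed substitute (continuity of $\pi^X_{1,i_0}$ plus Theorem \ref{thm_CovThm} "to control Mitchell orders") is not the right tool: Theorem \ref{thm_CovThm} plays no role in this lemma (it is used much later, in Claim \ref{clm_EtaLargeOrder}), and "eventually determined by $N^X_1$, $\gamma$ and the iteration" does not by itself yield eventual constancy; what makes the cofinalities constant is precisely the soundness-above-$\eta_X$/projectum analysis that your sketch omits.

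A second, more local problem: your claim that each $\beta^X_m$ (for large $m$) is regular in the full iterate $N^X_{i_0}$ is not justified by the acceptability argument you give. That argument needs $(\beta^X_m)^{+K_X}$ (or at least $\beta^X_\omega$) to remain a cardinal in $N^X_{i_0}$, so that all of its subsets of $\beta^X_m$ really do live below the common level; but the comparison only guarantees that $N^X_{i_0}$ end-extends $K_X||\beta^X_\omega$, and levels of $N^X_{i_0}$ above $\beta^X_\omega$ may collapse $\beta^X_\omega$ and singularize the $\beta^X_m$. Indeed the model does project below $\eta_X < \beta^X_\omega$, so this is a live concern, and it is exactly why the paper works with the maximal initial segment $M$ in which $\beta^X_\omega$ is a cardinal (and handles separately the case where that segment is proper) rather than with the full last model. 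So both the setup of where regularity holds and the decisive cofinality computation need to be replaced by the projectum/soundness argument; as written, the proposal does not prove the lemma.
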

\begin{proof}
  Assume not. By \eqref{eq_K_X_side_trivial} and universality of $K$,  $M^X_{\theta_X}$ end extends $K_X || \beta^X_\omega$.  Let $\eta_X$ be the strict supremum of $\{ \kappa_i^X \ | \ i < \theta_X \}$; by assumption, $\eta_X < \beta^X_\omega$.  Now \eqref{eq_Truncates} implies that $M^X_{\theta_X}$ projects below $\eta_X$ and is sound above $\eta_X$.\footnote{This is a routine inductive proof; see e.g.\ the proof of Lemma 6.6.4 of Zeman~\cite{ZemanBook}.}  Let $M$ be the maximal initial segment of $M^X_{\theta_X}$ such that $\beta^X_\omega$ is a cardinal in $M$.  If $M = M^X_{\theta_X}$ then we have already shown that there is some $\eta < \beta^X_\omega$ such that $M$ projects below $\eta$ and is sound above $\eta$.  If $M$ is a proper initial segment of $M^X_{\theta_X}$ then, since $\beta^X_\omega$ is definably collapsed over $M$, it follows that $M$ projects strictly below $\beta^X_\omega$ and, being a proper initial segment of a mouse, is (fully) sound.  In either case there are $n^*,m^* < \omega$ such that  
  \[ \rho^M_{n^*+1} \leq \beta^X_k < \beta^X_\omega \le \rho^M_{n^*} \]
  for all $k \geq m^*$.  Fix any $k > m^*$.  Since $\beta^X_k$ is regular in $K_X$, $\beta^X_\omega$ is a cardinal in $M$, and $M$ end-extends $K_X || \beta^X_\omega$, it follows by acceptability that $\beta^X_k$ is regular in $M$.  But then by Lemma \ref{lem_teclemma} together with the soundness properties of $M$ established above, $\cof(\beta^X_k) = \cof(\rho^M_{n^*})$ for all but finitely many $k$.  This contradicts \eqref{eq_Vec_beta_not_Constant}.
\end{proof}

Note that Lemma \ref{lem_Magidor} implies that:
\begin{equation}\label{eq_Theta_Limit_Ord}
\forall X \in T \ \ \theta_X \text{ is a limit ordinal}
\end{equation}

Lemma \ref{lem_Magidor}, together with the fact that there are only finitely many truncations in an iteration, yield that for every $X \in T$ there is an $n_X \in \omega$ such that, whenever $i < \theta_X$ and $\kappa^X_i \ge \beta^X_{n_X}$, then $i$ is not a truncation stage; i.e.\ all truncations of the $K$ versus $K_X ||\beta^X_\omega$ coiteration must occur before the critical points reach $\beta^X_{n_X}$.  Using \eqref{eq_Theta_Limit_Ord} and Fact \ref{fact_DegreesStabilize}, it follows that for each $X \in T$ the sequence
\begin{equation*}
\langle \degree(N^X_i, \kappa^X_i) \ | \ \kappa^X_i \ge \beta^X_{n_X} \ \text{and} \  i < \theta_X \rangle
\end{equation*}
is, eventually, a constant sequence of natural numbers.

So by increasing $n_X$ if necessary, we may also assume that $\degree(N^X_i, \kappa^X_i)$ is constant with value $m_X$ for all $i$ such that $\kappa^X_i \ge \beta^X_{n_X}$.  By countable completeness of the nonstationary ideal:
\begin{equation}\label{eq_m_star_n_star}
\exists m^*, n^* \in \omega \ \ \exists T' \subset T \text{  stationary } \forall X \in T' \ n_X = n^* \ \text{and} \ m_X = m^*
\end{equation}

Let $X \in T'$.  Since (total) iteration maps are cofinal, we have that the cofinality of $\rho_{m^*}(N^X_i)$ is constant for all $i$ which satisfy:
\begin{equation}\label{eq_NoDropInterval}
\beta^X_{n^*} \le \kappa^X_i < \beta^X_\omega
\end{equation}
For each $X \in T'$ let $\lambda_X$ denote the constant cofinality of $\rho_{m^*}(N^X_i)$, for those $i$ satisfying (\ref{eq_NoDropInterval}).   

For each $n \in \omega$ define:
\begin{equation}\label{eq_Theta_X_n}
\theta^X_n:= \text{ the least stage such that } \kappa^X_{\theta^X_n} \ge \beta^X_n
\end{equation}
Note that:
\begin{equation}
\forall n \in \omega \ \ \Big|K_X || \beta^X_n\Big| = \Big| X \cap \omega_n \Big| < \aleph_\omega
\end{equation}
Combined with \eqref{eq_Truncates} and Lemma 4.4.1 of Zeman~\cite{ZemanBook}, 
 this implies that $|N^X_i| < \aleph_\omega$ for all $i \in (1, \theta_X)$.  In particular:
\begin{equation}
\forall X \in T' \ \ \lambda_X < \aleph_\omega
\end{equation}

So $\lambda_X \in \{ \omega_k \ |\ k \in \omega  \} \subset X$.  Thus by pressing down there is some fixed infinite cardinal $\lambda^* < \aleph_\omega$ and a stationary $T'' \subset T'$ such that $\lambda_X = \lambda^*$ for all $X \in T''$.  Since $\langle \mu_n \ | \ n > m_1 \rangle$ is not eventually constant:
\begin{equation}\label{eq_InfinitelyOften}
I:= \{ n \in \omega \ | \  \lambda^* \ne \mu_n \big( = \text{cf}^V(\beta_n) \big)  \}  \text{ is infinite}
\end{equation}

We now consider two cases.  If, for some $X \in T''$ and $n \in I \cap (n^*, \omega)$, there is an iterate $N^X_i$ such that $\text{crit}(E^X_i) < \beta^X_n$ but the generators of $E^X_i$ are cofinal in $\beta^X_n$, then by iterating this extender we can obtain a model as in the conclusion of Theorem \ref{thm_MainTheorem}.  \textbf{So from now on we assume there is no such extender}, i.e.\ assume: 
\begin{eqnarray}\label{eq_OnlyOneGenerator}
\begin{split}
\forall X \in T'' \ \  \forall n \in I \cap (n^*, \omega) \ \  \forall i < \theta^X_n  \\
\text{ the generators of } E^X_i \text{ are bounded below } \beta^X_n
\end{split}
\end{eqnarray}

\begin{lemma}\label{lem_CofInBeta_n}
For every $X \in T''$ and for all $n \in I \cap (n^*, \omega)$: the critical points of the coiteration are cofinal in $\beta_n^X$.  
\end{lemma}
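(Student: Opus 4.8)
The plan is to argue by contradiction, closely following the template of Lemma~\ref{lem_Magidor}. Fix $X \in T''$ and $n \in I \cap (n^*,\omega)$ and suppose toward a contradiction that the critical points of the coiteration are bounded below $\beta^X_n$; put $\eta := \sup\{\kappa^X_i \mid i<\theta_X,\ \kappa^X_i < \beta^X_n\}$, so $\eta < \beta^X_n$. By Lemma~\ref{lem_Magidor} the critical points are cofinal in $\beta^X_\omega > \beta^X_n$, so the stage $\tau := \theta^X_n$ of \eqref{eq_Theta_X_n} is well defined and $<\theta_X$, every critical point used before stage $\tau$ is $\le\eta$, and $\kappa^X_\tau\ge\beta^X_n$. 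Write $N := N^X_\tau$.

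First I would record what $N$ knows about $\beta^X_n$. Since $\omega_n$ is regular in $V$ and $K\subseteq V$, $\omega_n$ is regular in $K$; pulling back along the elementary embedding $\sigma_X\restriction K_X\colon K_X\to K\cap H_\theta$, $\beta^X_n$ is a regular cardinal of $K_X$, hence of $K_X||\beta^X_\omega$. As the $K_X||\beta^X_\omega$-side of the coiteration is trivial, $N$ agrees with $K_X||\beta^X_\omega$ strictly below its first point of disagreement $\nu^X_\tau$, and $\nu^X_\tau>\text{crit}(E^X_\tau)=\kappa^X_\tau\ge\beta^X_n$ since an extender's index exceeds its critical point. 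Hence $N$ agrees with $K_X||\beta^X_\omega$ past $\beta^X_n$, so $\beta^X_n$ is a cardinal of $N$; and assumption \eqref{eq_OnlyOneGenerator} (together with the case dichotomy) ensures there is no long extender with critical point below $\beta^X_n$ and generators reaching $\beta^X_n$ anywhere relevant, so no extender of $N$ overlaps $\beta^X_n$ and $\beta^X_n$ remains regular in $N$.

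Next I would turn to the fine structure of $N$. Exactly as in the proof of Lemma~\ref{lem_Magidor}, \eqref{eq_Truncates} and the standard persistence of soundness and projecta through iteration (Zeman~\cite{ZemanBook}, Lemma~6.6.4) give that $N$ projects below $\eta$ and is sound above $\eta$; and since $\beta^X_{n^*}\le\kappa^X_\tau<\beta^X_\omega$, the stage $\tau$ lies in the region where $\degree(N^X_i,\kappa^X_i)=m^*$ and $\cof(\rho_{m^*}(N^X_i))=\lambda^*$. Hence $N$ is $m^*$-sound, $\rho_{m^*+1}(N)\le\eta<\beta^X_n$, and $\cof(\rho_{m^*}(N))=\lambda^*$. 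The decisive --- and by far the most delicate --- point is that $\beta^X_n\le\rho_{m^*}(N)$, so that $\beta^X_n$ falls in the half-open interval $(\rho_{m^*+1}(N),\rho_{m^*}(N)]$; this is where assumption \eqref{eq_OnlyOneGenerator} does its real work. The mechanism is that each extender applied before stage $\tau$ has critical point $<\beta^X_n$ and, by \eqref{eq_OnlyOneGenerator}, generators bounded below $\beta^X_n$, and such an extender fixes every cardinal of the iterate it is applied to that lies at or above its generators; tracking the $m^*$-th projectum --- a cardinal of each iterate --- along the iteration then shows it cannot be carried past $\beta^X_n$, so $\rho_{m^*}(N)\ge\beta^X_n$.

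Granting the pincer $\rho_{m^*+1}(N)\le\eta<\beta^X_n\le\rho_{m^*}(N)$, the argument finishes: $\beta^X_n$ is a regular cardinal of $N$ lying in $(\rho_{m^*+1}(N),\rho_{m^*}(N)]$ and $N$ is $m^*$-sound, so Lemma~\ref{lem_teclemma} (or, directly, in the degenerate case $\beta^X_n=\rho_{m^*}(N)$) yields $\cof(\beta^X_n)=\cof(\rho_{m^*}(N))=\lambda^*$. But $\cof(\beta^X_n)=\mu_n$ by the hypotheses on the $\mu_n$'s, and $\mu_n\ne\lambda^*$ since $n\in I$, a contradiction, so the critical points are cofinal in $\beta^X_n$. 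I expect essentially all of the genuine work to sit in the third paragraph: verifying $\rho_{m^*}(N)\ge\beta^X_n$, which requires a careful tracking of the projecta along the iteration, exploiting both the shortness of the applied extenders from \eqref{eq_OnlyOneGenerator} and the truncation \eqref{eq_Truncates}, and disposing of the various degenerate possibilities --- whereas the persistence facts and the appeal to Lemma~\ref{lem_teclemma} are routine given the template of Lemma~\ref{lem_Magidor}.
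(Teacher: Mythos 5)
Your skeleton does match the paper's argument: work at the stage $\tau=\theta^X_n$ where the critical points first reach $\beta^X_n$, show $\beta^X_n$ is regular in $N:=N^X_\tau$, use \eqref{eq_OnlyOneGenerator} to get soundness of $N$ above a point below $\beta^X_n$, show $\beta^X_n\le\rho_{m^*}(N)$, and then apply Lemma \ref{lem_teclemma} to get $\cof(\beta^X_n)=\lambda^*$, contradicting $n\in I$. But there is a genuine gap precisely at the step you yourself flag as carrying ``essentially all of the genuine work'': the inequality $\rho_{m^*}(N)\ge\beta^X_n$ is never proved. The mechanism you sketch for it does not work as stated --- an extender whose generators are bounded below $\beta^X_n$ does \emph{not} fix every cardinal of the model it is applied to lying at or above its generators (already the successor of the critical point is moved), and the inference ``the projectum cannot be carried past $\beta^X_n$, so $\rho_{m^*}(N)\ge\beta^X_n$'' runs in the wrong direction. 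More to the point, no tracking of projecta along the iteration is needed at all: since $\kappa^X_\tau\ge\beta^X_n>\beta^X_{n^*}$, the stabilization \eqref{eq_m_star_n_star} gives $\degree(N^X_\tau,\kappa^X_\tau)=m^*$, and by the definition of the degree of the applied ultrapower this means exactly that $\kappa^X_\tau<\rho_{m^*}(N^X_\tau)$, hence $\rho_{m^*}(N)\ge\beta^X_n$ in one line. This is how the paper argues; the step you diagnose as the most delicate is in fact immediate, and your proposed route to it is unsound.

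Two smaller calibration points. First, \eqref{eq_OnlyOneGenerator} is not what makes $\beta^X_n$ regular in $N$ (``no overlapping extender'' is not the relevant mechanism); regularity follows from regularity of $\beta^X_n$ in $K_X$, the agreement of $N$ with $K_X||\beta^X_\omega$ below the current index, the fact that $\tau$ is past all truncation stages, and acceptability. What \eqref{eq_OnlyOneGenerator} actually buys is a bound $\zeta<\beta^X_n$ on the generators of the extenders applied before stage $\tau$, whence $N$ is sound above $\zeta+1$. Second, your stronger claim that $N$ is sound above $\eta=\sup\{\kappa^X_i:\kappa^X_i<\beta^X_n\}$ (and that $\rho_{m^*+1}(N)\le\eta$) need not hold, since the generators of the last applied extender may exceed $\eta$; soundness above $\zeta+1$ is what the paper uses and is all that Lemma \ref{lem_teclemma} requires. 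With these repairs your argument collapses to the paper's proof.
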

\begin{proof}
Fix $n > n^*$ s.t. $n \in I$; i.e.\ $\cof(\beta^X_n) \neq \lambda^* = \lambda_X$. Now let us assume for a contradiction that there is an $i$ with $\kappa^X_i < \beta^X_n$ but $\kappa^X_{i+1} \geq \beta^X_n$. Note that $\beta^X_n$ is regular in $K_X$.  Since $K_X || \beta_\omega^X$ doesn't move in the coiteration, $i > n^*$ (in particular $i$ isn't a truncation stage), and by acceptability, it follows that $\beta^X_n$ is also regular in $N^X_{i+1}$. Furthermore, assumption \eqref{eq_OnlyOneGenerator} implies that the generators of $E^X_i$ are bounded by some $\zeta < \beta^X_n$, which in turn implies that $N^X_{i+1}$ is sound above $\zeta + 1$.  Also $\rho_{m^*}(N^X_{i+1}) \geq \kappa^X_{i+1} \geq \beta^X_n$ (recall $m^*$ was defined in \eqref{eq_m_star_n_star} as the uniform eventual value of $\degree(N^X_j, \kappa^X_j)$). So we can apply Lemma \ref{lem_teclemma} to conclude that $\cof(\beta^X_n) = \lambda_X$. But this contradicts our choice of $n$!
\end{proof}

In particular if $X \in T''$ and $n \in I \cap (n^*,\omega)$, then $\theta_n^X$ is a limit ordinal and $\text{cf}^V(\theta^X_n) = \text{cf}^V(\beta^X_n) = \mu_n$; here $\theta^X_n$ is as defined in \eqref{eq_Theta_X_n}.

\begin{lemma}\label{lem_thread}
Let $X \in T''$ and $n  \in I \cap (n^*, \omega)$. Then the following set is closed and unbounded in $\theta^X_n$:
\begin{equation*}
C^X_n:= \{ j < \theta^X_n \ | \  \pi^X_{j, \theta^X_n}(\kappa^X_j) = \beta^X_n \} 
\end{equation*} 
\end{lemma}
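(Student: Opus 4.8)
The plan is to exploit the fact that $\theta^X_n$ is a limit ordinal of cofinality $\mu_n$ (established just before the lemma via Lemma~\ref{lem_CofInBeta_n}), together with the continuity of the direct limit along the main branch. First I would check closure: if $\langle j_\xi \mid \xi < \delta\rangle$ is an increasing sequence from $C^X_n$ with supremum $j < \theta^X_n$, then since the iteration maps commute, $\pi^X_{j_\xi, j}(\kappa^X_{j_\xi})$ is nondecreasing in $\xi$; each such value is $\le \beta^X_n$ because it maps under $\pi^X_{j,\theta^X_n}$ into $\beta^X_n$ (indeed onto $\beta^X_n$ by the $C^X_n$-hypothesis, using that $\pi^X_{j_\xi,\theta^X_n}=\pi^X_{j,\theta^X_n}\circ\pi^X_{j_\xi,j}$). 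At a limit stage $j$, $N^X_j$ is the direct limit, so $\kappa^X_j=\sup_{\xi}\pi^X_{j_\xi,j}(\kappa^X_{j_\xi})$ provided the $j_\xi$ are cofinal in the branch leading to $j$; more carefully one argues that $\pi^X_{j,\theta^X_n}(\kappa^X_j)=\sup_\xi \pi^X_{j_\xi,\theta^X_n}(\kappa^X_{j_\xi})=\beta^X_n$ by continuity of the iteration map at the limit ordinal $j$ and the fact that the critical points are cofinal in $\beta^X_n$ below $j$ (again Lemma~\ref{lem_CofInBeta_n}, applied inside the relevant segment). So $j\in C^X_n$.

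For unboundedness, fix $\alpha<\theta^X_n$; I want $j\in C^X_n$ with $j>\alpha$. The key point is that $\pi^X_{j,\theta^X_n}(\kappa^X_j)$ is always $\ge \beta^X_n$ once $j$ is large enough that $\kappa^X_j \ge$ the critical points already "used," and it is always $\le \beta^X_n$ because, by \eqref{eq_OnlyOneGenerator}, every extender $E^X_i$ with $i<\theta^X_n$ has generators bounded below $\beta^X_n$, hence $\kappa^X_i<\beta^X_n$ and more importantly the image $\pi^X_{i,\theta^X_n}(\kappa^X_i)$ cannot exceed $\beta^X_n$: any ordinal below $\beta^X_n$ in $N^X_i$ is mapped by $\pi^X_{i,\theta^X_n}$ into $\beta^X_n$, since $\beta^X_n$ is a cardinal (indeed regular) in $N^X_{\theta_X}$ which end-extends $K_X||\beta^X_\omega$, and the generators being bounded below $\beta^X_n$ at every stage means no generator is ever placed at or above $\beta^X_n$ cofinally. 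Thus for each $j$, either $\pi^X_{j,\theta^X_n}(\kappa^X_j)<\beta^X_n$ or $=\beta^X_n$. Using that the critical points $\kappa^X_i$ are cofinal in $\beta^X_n$ (Lemma~\ref{lem_CofInBeta_n}), for a tail of $j<\theta^X_n$ we must have $\pi^X_{j,\theta^X_n}(\kappa^X_j)=\beta^X_n$: indeed pick $i>j$ with $\kappa^X_i\ge \pi^X_{j,\theta^X_n}(\kappa^X_j)$'s preimage threshold; the standard argument is that the sequence $\langle \pi^X_{j,\theta^X_n}(\kappa^X_j)\rangle_j$ is eventually equal to its supremum $\beta^X_n$ because it cannot be strictly increasing through a regular cardinal of cofinality $\mu_n=\text{cf}(\theta^X_n)$ while the domain critical points already exhaust $\beta^X_n$. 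I would make this precise by noting $\sup_{j<\theta^X_n}\pi^X_{j,\theta^X_n}(\kappa^X_j)\ge \sup_{j<\theta^X_n}\kappa^X_j = \beta^X_n$, and that the sup is witnessed on a club, giving club-many $j$ with the image already equal to $\beta^X_n$.

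A cleaner route for unboundedness: for each $j<\theta^X_n$, the ordinal $\pi^X_{j,\theta^X_n}(\kappa^X_j)$ is a critical point appearing along the branch, namely $\pi^X_{j,\theta^X_n}(\kappa^X_j)\ge \kappa^X_{j'}$ whenever $j'>j$ lies on the branch and... rather, I would simply observe: since $\kappa^X_{j}\le \pi^X_{j,\theta^X_n}(\kappa^X_j)<\beta^X_n$ would force, by cofinality of the $\kappa$'s, some later $\kappa^X_{i}$ with $j<i$ and $\kappa^X_i>\pi^X_{j,\theta^X_n}(\kappa^X_j)$, but crit monotonicity along a branch gives $\kappa^X_i=\pi^X_{j,i}(\kappa^X_{j'})$ for some $j'\le j$... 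This is getting delicate, which brings me to the main obstacle.

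The main obstacle is the unboundedness direction, specifically controlling the images $\pi^X_{j,\theta^X_n}(\kappa^X_j)$ precisely enough to see they equal $\beta^X_n$ on an unbounded (indeed club) set rather than merely converging to it. The resolution I expect to use: because \eqref{eq_OnlyOneGenerator} bounds all generators below $\beta^X_n$, the critical point $\kappa^X_j$ is, at stages $j$ with $\kappa^X_j$ close to $\beta^X_n$, not moved further by the remaining part of the iteration up to $\theta^X_n$ — more precisely, for club-many $j<\theta^X_n$ one has $\kappa^X_j$ equal to its own image $\pi^X_{j,\theta^X_n}(\kappa^X_j)$ only in the limit; but the genuinely correct statement, and the one I will prove, is that $\pi^X_{j,\theta^X_n}(\kappa^X_j)=\sup\{\kappa^X_i : i<\theta^X_n\}=\beta^X_n$ for all $j$ past the last truncation stage such that all subsequent extenders applied before stage $\theta^X_n$ have critical point $\ge \kappa^X_j$ and generators $<\beta^X_n$ — this holds cofinally because $\theta^X_n$ has uncountable... no, $\mu_n$ may be $\omega$. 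I will instead argue directly: $\pi^X_{j,\theta^X_n}(\kappa^X_j)\le\beta^X_n$ always (generators bounded), and $\pi^X_{j,\theta^X_n}(\kappa^X_j)\ge\kappa^X_i$ for every $i$ with $j\le i<\theta^X_n$ such that $\kappa^X_i$ is on the branch $[j,\theta^X_n)$, hence $\ge\sup$ of a cofinal set of $\kappa^X_i$'s (using that branch critical points are cofinal, which follows from Lemma~\ref{lem_CofInBeta_n} since every critical point used is eventually on the main branch), giving $=\beta^X_n$. Thus $\{j<\theta^X_n : \pi^X_{j,\theta^X_n}(\kappa^X_j)=\beta^X_n\}$ is a tail, in particular unbounded, and combined with the closure argument above, $C^X_n$ is club in $\theta^X_n$.
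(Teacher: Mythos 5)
The unboundedness half of your argument has a genuine gap, and the statement you end up proving ("$C^X_n$ is a tail of $\theta^X_n$") is stronger than the lemma and is not true in general. Your key inequality — that $\pi^X_{j,\theta^X_n}(\kappa^X_j) \ge \kappa^X_i$ for all later $i < \theta^X_n$, hence $\ge \sup_i \kappa^X_i = \beta^X_n$ — is not a fact about these coiterations. After one step the critical point $\kappa^X_j$ is sent to some $\lambda_j = \pi^X_{j,j+1}(\kappa^X_j)$, and the next disagreement with $K_X$ may well occur at an extender whose critical point $\kappa^X_{j+1}$ is strictly larger than $\lambda_j$; in that case $\lambda_j$ is never moved again and $\pi^X_{j,\theta^X_n}(\kappa^X_j) = \lambda_j < \beta^X_n$, so $j \notin C^X_n$. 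Dually, if $j \in C^X_n$ and $\kappa^X_{j+1}$ happens to lie strictly between $\kappa^X_j$ and $\lambda_j$, then elementarity gives $\pi^X_{j+1,\theta^X_n}(\kappa^X_{j+1}) < \pi^X_{j+1,\theta^X_n}(\lambda_j) = \beta^X_n$, so $j+1 \notin C^X_n$: the set genuinely need not be a tail. Your "$\le \beta^X_n$" claim is also unsupported — the fact that $\beta^X_n$ is a cardinal of the limit model does not prevent ordinals below it from being mapped above it (critical points are mapped past many cardinals), and bounded generators do not bound $\pi(\kappa^X_j)$.

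A symptom of the missing content is that your argument never invokes Lemma \ref{lem_teclemma} or the hypothesis $n \in I$ except through Lemma \ref{lem_CofInBeta_n}; but the fine structure has to enter again. The paper's proof is by contradiction: if $C^X_n$ were bounded by $i_0$, then since $\theta^X_n$ is a limit stage the model $N^X_{\theta^X_n}$ is a direct limit, so there is $j^* \in (i_0,\theta^X_n)$ at which $\beta^X_n$ has a preimage $\bar\beta$; one checks $\bar\beta > \kappa^X_{j^*}$ (equality is excluded because $j^* \notin C^X_n$, and $\bar\beta < \kappa^X_{j^*}$ would give $\beta^X_n < \beta^X_n$ since the map is the identity below the critical point). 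Since $\beta^X_n$ is regular in $N^X_{\theta^X_n}$, $\bar\beta$ is regular in $N^X_{j^*}$, so the iteration maps are continuous at $\bar\beta$ and $\cof(\bar\beta) = \cof(\beta^X_n) = \mu_n \neq \lambda_X$ (this is where $n \in I$ is used); on the other hand $N^X_{j^*}$ is sound above $\kappa^X_{j^*}$, so Lemma \ref{lem_teclemma} forces $\cof(\bar\beta) = \lambda_X$ — contradiction. In other words, the correct mechanism is not that the images of the $\kappa^X_j$ sweep up to $\beta^X_n$, but that whenever $\beta^X_n$ is in the range of $\pi^X_{j,\theta^X_n}$ (which happens cofinally often by the direct-limit property), its preimage is forced by the cofinality computation to be exactly $\kappa^X_j$. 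Your closure argument is essentially fine (and is the routine part), but the unboundedness direction needs to be replaced by an argument of this kind.
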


\begin{proof}
First we show that $C^X_n$ is unbounded in $\theta^X_n$.  Assume not, and let $i_0 < \theta^X_n$ be a bound on $C^X_n$.  By Lemma \ref{lem_CofInBeta_n}, $\theta^X_n$ is a limit ordinal.  So there is some $j^* \in (i_0,  \theta^X_n)$ such that $\beta^X_n$ has a preimage in $N^X_{j^*}$, say $\bar{\beta}$.  We claim that
\begin{equation}\label{eq_LessBarBeta}
\kappa^X_{j^*} < \bar{\beta}
\end{equation} 
Suppose not.  Our assumptions imply that these two ordinals are not equal, so it must be that $\kappa^X_{j^*} > \bar{\beta}$.  But $\kappa^X_{j^*} < \beta^X_n$ (since $j^* < \theta^X_n$), so since $\pi^X_{j^*,\theta^X_n} \restriction \kappa^X_{j^*} = \text{id}$ this would imply that $\beta^X_n < \beta^X_n$, a contradiction.  
 
Since $\beta^X_n$ is regular in $K_X || \beta^X_\omega$, $\theta^X_n$ is past all truncation points of the $K$ versus $K_X || \beta^X_\omega$ coiteration, and $K_X$ does not move in the coiteration, it follows that $\beta^X_n$ is regular in $N^X_{\theta^X_n}$.  So by elementarity of the iteration map:
\begin{equation}\label{eq_BarBetaRegular}
\bar{\beta} \text{ is regular in } N^X_{j^*}
\end{equation}
So, our iteration embeddings are continuous at $\bar{\beta}$ and thus $\cof(\bar{\beta}) = \cof(\beta^X_n) \neq \lambda_X$, where the latter inequality is because $n \in I$. 
 
On the other hand $N^X_{j^*}$ is sound above $\kappa^X_{j^*}$, $\bar{\beta}$ is regular in $N^X_{j^*}$ by \eqref{eq_BarBetaRegular}, and $\bar{\beta}$ is strictly above $\kappa^X_{j^*}$ by \eqref{eq_LessBarBeta}.  So we can conclude by Lemma \ref{lem_teclemma} that $\cof(\bar{\beta}) = \lambda_X$.  This is a contradiction, completing the proof that $C^X_n$ is unbounded.  That $C^X_n$ is closed below $\theta^X_n$ is a routine exercise, using the fact that the critical points of the iteration are increasing.
\end{proof}

Let $I'$ denote the tail end of $I$ beyond $n^*$, and also ensure that 
\[
(\mu^*)^+ < \omega_{\text{min}(I')}
\]

For the rest of the proof, fix some $n \in I'$; by \eqref{eq_InfinitelyOften} there are infinitely many such $n$.  Also fix some $X \in T''$.  Observe that if $C^X_n$ is as in the statement of Lemma \ref{lem_thread}, then 
\begin{equation*}
D^X_n:= \{ \alpha \ | \ \exists j \in C^X_n \ \ \alpha = \kappa^X_j \} \text{ is club in } \beta_n 
\end{equation*}

Also observe that if $j \in C^X_n$ then since $j$ is past all truncations, $\kappa^X_j$ is a regular cardinal in the $j$-th iterate of $K_X$; but since $K_X$ doesn't move in the coiteration this just means $\kappa^X_j$ is regular in $K_X$.  Thus
\begin{equation*}
\forall \alpha \in D^X_n \ \ K_X \models \ \alpha \text{ is regular}
\end{equation*} 
and so by elementarity of $\sigma_X$ it follows that:
\begin{equation}\label{eq_WideTildeReg}
\forall \alpha \in \widetilde{D}^X_n:= \sigma_X[D^X_n] \ \ \ K \models \alpha \text{ is regular}
\end{equation}

By Observation \ref{obs_OmegaClosedOrds}, $\widetilde{D}^X_n$ is closed under limits of cofinality $<\mu^*$.  Also $\widetilde{D}^X_n$ is cofinal in $\text{sup}\big( \sigma_X [ \beta_n] \big) = \text{sup}(X \cap \omega_n)$.  Together with \eqref{eq_WideTildeReg} it follows that
\begin{align}\label{eq_AllLimitsOfReg}
\begin{split}
\forall \eta \in \text{lim} \big( \widetilde{D}^X_n \big) \cap \text{cof}(\ge \mu^*)  \text{, all but nonstationarily} \\
\text{many members of } \eta \cap \text{cof}(<\mu^*) \text{ are regular in } K
\end{split}
\end{align} 
The notation $\text{lim} \big( \widetilde{D}^X_n \big) \cap \text{cof}(\ge \mu^*)$ in \eqref{eq_AllLimitsOfReg} really means \textbf{all} limits of $\widetilde{D}^X_n$ of cofinality $\ge \mu^*$, not just those below $\text{sup}(X \cap \omega_n)$.  In particular, it includes the ordinal $\text{sup}(X \cap \omega_n)$.\footnote{And $\text{sup}(X \cap \omega_n)$ might be the \emph{only} element of $\text{lim} \big( \widetilde{D}^X_n \big) \cap \text{cof}(\ge \mu^*)$, in the case that $\mu^* = \mu_n$.} 

\begin{globalClaim}\label{clm_EtaLargeOrder}
If $\eta \in \text{lim} \big( \widetilde{D}^X_n \big) \cap \text{cof}(\ge \mu^*)$ \textbf{and} $\text{cf}(\eta) < \omega_{n-1} \le \eta$, then $o^K(\eta) \ge \text{cf}^V(\eta) \ge \mu^*$.
\end{globalClaim}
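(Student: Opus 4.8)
The plan is to apply the Covering Theorem, Theorem~\ref{thm_CovThm}, with $\gamma := \eta$. Half of the conclusion is free: $\text{cf}^V(\eta) \ge \mu^*$ holds simply because $\eta \in \text{cof}(\ge \mu^*)$. For the other half, $o^K(\eta) \ge \text{cf}^V(\eta)$, I will verify the hypotheses of Theorem~\ref{thm_CovThm} for $\eta$. Two of them are immediate from the hypotheses of the claim: $\text{cf}^V(\eta) \ge \mu^* \ge \omega_1 > \omega$; and the assumption $\text{cf}(\eta) < \omega_{n-1} \le \eta$ gives $\text{cf}^V(\eta) < |\eta|$, and also $\eta > \omega_2$ (if $\eta$ were $\omega_{n-1}$ then $\text{cf}^V(\eta) = \omega_{n-1}$, contradicting the hypothesis, so $\eta > \omega_{n-1}$; and $\text{cf}^V(\eta) \ge \omega_1$ forces $\omega_{n-1} > \omega_1$, i.e.\ $\omega_{n-1} \ge \omega_2$). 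So the whole claim reduces to showing that $\eta$ is regular in $K$, after which Theorem~\ref{thm_CovThm} delivers $o^K(\eta) \ge \text{cf}^V(\eta)$, which together with $\text{cf}^V(\eta) \ge \mu^*$ is exactly what is asserted.

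To see that $\eta$ is regular in $K$: since $\eta \in \text{lim}(\widetilde{D}^X_n)$ and every member of $\widetilde{D}^X_n$ is a regular cardinal of $K$ by~\eqref{eq_WideTildeReg}, $\eta$ is a limit of regular $K$-cardinals, hence a cardinal of $K$. Suppose toward a contradiction that $\tau := \text{cf}^K(\eta) < \eta$. A subset of $\eta$ cofinal in the sense of $K$ is cofinal in $V$, so $\text{cf}^V(\eta) \le \tau$, and therefore $\tau \ge \mu^* \ge \omega_1$. Fix $C \in K$ club in $\eta$ of order type $\tau$; since $\tau$ is uncountable, $\text{lim}(C)$ is club in $\eta$, and since $\text{cf}^V(\eta) > \omega$, the set $\{\alpha < \eta : \text{cf}^V(\alpha) = \omega\}$ is stationary in $\eta$, so $\text{lim}(C) \cap \text{cof}(\omega)$ is stationary in $\eta$.

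Now apply~\eqref{eq_AllLimitsOfReg} to $\eta$: the members of $\eta \cap \text{cof}(<\mu^*)$ that are not regular in $K$ form a nonstationary set, so there is a club $E \subseteq \eta$ each of whose members of $V$-cofinality $<\mu^*$ is regular in $K$. Pick $\alpha \in \text{lim}(C) \cap E \cap \text{cof}(\omega)$ with $\alpha > \tau$; such $\alpha$ exists because $\text{lim}(C)$, $E$ and $(\tau,\eta)$ are club while $\text{cof}(\omega)$ is stationary in $\eta$. Since $\alpha \in E$ and $\text{cf}^V(\alpha) = \omega < \mu^*$, $\alpha$ is regular in $K$, i.e.\ $\text{cf}^K(\alpha) = \alpha$. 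On the other hand $\alpha \in \text{lim}(C)$ with $\alpha < \eta = \sup C$, so $C \cap \alpha \in K$ is cofinal in $\alpha$ and is a proper initial segment of $C$, hence of order type $< \tau$; thus $\text{cf}^K(\alpha) \le \text{ot}(C \cap \alpha) < \tau < \alpha$, contradicting $\text{cf}^K(\alpha) = \alpha$. So $\eta$ is regular in $K$.

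The only genuinely load-bearing step is the contradiction in the previous paragraph, which combines~\eqref{eq_AllLimitsOfReg} with the $K$-club witnessing singularity of $\eta$; everything else is routine cofinality bookkeeping (that $\text{cf}^V(\eta) \le \text{cf}^K(\eta)$, that $\text{lim}(C)$ and $E$ are club and $\text{cof}(\omega)$ stationary in $\eta$ since $\text{cf}^V(\eta) > \omega$) together with extracting the three numerical hypotheses of Theorem~\ref{thm_CovThm} from ``$\text{cf}(\eta) < \omega_{n-1} \le \eta$'' and $\mu^* \ge \omega_1$. I do not expect any idea beyond~\eqref{eq_AllLimitsOfReg} and Theorem~\ref{thm_CovThm} to be needed.
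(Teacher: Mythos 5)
Your proposal is correct and follows essentially the same route as the paper: reduce via Theorem \ref{thm_CovThm} to showing $\eta$ is regular in $K$, then contradict \eqref{eq_AllLimitsOfReg} using a club in $K$ witnessing $K$-singularity of $\eta$, whose limit points above $\text{cf}^K(\eta)$ are singular in $K$. The only difference is cosmetic: the paper phrases the contradiction by noting that stationarily many points of $\eta \cap \text{cof}(<\mu^*)$ lie on that club and are singular in $K$, whereas you pick a single $\omega$-cofinal point in the intersection of the relevant clubs.
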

\begin{proof}
Fix such an $\eta$.  The assumptions of the claim guarantee that $\omega < \text{cf}(\eta) < |\eta|$ and that $\eta > \omega_2$; so by Theorem \ref{thm_CovThm}, to prove that $o^K(\eta) \ge \text{cf}^V(\eta)$ it suffices to prove that $\eta$ is regular in $K$.  Suppose for a contradiction that $\eta$ is singular in $K$.  In $K$, fix some continuous $\vec{\eta} = \langle \eta_i \ | \ i < \text{cf}^K(\eta) \rangle$ which is cofinal in $\eta$ and such that $\eta_0 > \text{cf}^K(\eta)$.  Then every member of
\[
E:= \{ \eta_i \ | \  i \text{ is a limit ordinal}  \}
\]
is singular in $K$,\footnote{Because $\vec{\eta} \restriction i$ witnesses singularity of $\eta_i$.} and moreover $E$ is club in $\eta$.  So in particular, almost every member of $\eta \cap \text{cof}(<\mu^*)$ is singular in $K$.  This contradicts \eqref{eq_AllLimitsOfReg}.
\end{proof}

\begin{globalClaim}\label{clm_StatMany}
The set of $\eta$ which satisfy the assumptions of Claim \ref{clm_EtaLargeOrder} is stationary in $\omega_n$.
\end{globalClaim}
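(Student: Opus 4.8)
The plan is to prove the claim by showing that every club $C\subseteq\omega_n$ contains an $\eta$ satisfying the hypotheses of Claim~\ref{clm_EtaLargeOrder} relative to some $Y\in T''$ (so one should read the ``$\widetilde D^X_n$'' in the statement as ranging over $X\in T''$). This rests on two ingredients: a cofinality computation carried out inside a single set $\widetilde D^X_n$, and the observation that the map $X\mapsto\sup(X\cap\omega_n)$ carries the stationary family $T''$ onto a stationary subset of $\omega_n$.

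For the computation, write $\delta_X:=\sup(X\cap\omega_n)$. Since $D^X_n$ is club in $\beta^X_n$ and $\sigma_X\restriction\beta^X_n$ is an order isomorphism onto $X\cap\omega_n$, the set $\widetilde D^X_n=\sigma_X[D^X_n]$ is cofinal in $\delta_X$ and its limit points below $\delta_X$ form a club in $\delta_X$; moreover $\text{cf}^V(\delta_X)=\text{cf}^V(\beta^X_n)=\mu_n$. Note that $\mu^*$ and $\mu_n$ are both regular, being infinite cardinals below $\aleph_\omega$, and that $\mu^*<\omega_{n-1}$ since $(\mu^*)^+<\omega_{\min(I')}\le\omega_n$. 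Now I split on $\mu_n$ versus $\mu^*$. If $\mu_n>\mu^*$, then since $\mu^*$ is regular and $\mu^*<\mu_n=\text{cf}^V(\delta_X)$, the set of $\eta\in\text{lim}(\widetilde D^X_n)\cap\delta_X$ with $\text{cf}^V(\eta)=\mu^*$ is stationary in $\delta_X$; discarding the part below $\omega_{n-1}$, each such $\eta$ has $\omega_{n-1}\le\eta$ and $\text{cf}^V(\eta)=\mu^*<\omega_{n-1}$, hence satisfies the hypotheses of Claim~\ref{clm_EtaLargeOrder}. If instead $\mu_n=\mu^*$, then $\delta_X$ itself is a limit point of $\widetilde D^X_n$ with $\mu^*\le\text{cf}^V(\delta_X)=\mu^*<\omega_{n-1}<\delta_X$, so $\delta_X$ satisfies the hypotheses of Claim~\ref{clm_EtaLargeOrder} (this is the degenerate case flagged in the footnote to \eqref{eq_AllLimitsOfReg}, where $\delta_X$ may be the only qualifying ordinal attached to $X$).

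Finally I move the model. Given a club $C\subseteq\omega_n$, stationarity of $T''$ together with $C\in H_\theta$ gives a $Y\in T''$ with $C\in Y$; then $\delta_Y=\sup(Y\cap C)$ is a limit point of $C$, so $C\cap\delta_Y$ is club in $\delta_Y$. Applying the previous paragraph to $Y$: if $\mu_n>\mu^*$, intersect the stationary set of cofinality-$\mu^*$ limit points of $\widetilde D^Y_n$ with the club $C\cap\delta_Y$ and with the tail of $\delta_Y$ above $\omega_{n-1}$ to obtain $\eta\in C$ qualifying for Claim~\ref{clm_EtaLargeOrder}; if $\mu_n=\mu^*$, take $\eta=\delta_Y\in C$. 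Either way $C$ meets the set of $\eta$ satisfying the hypotheses of Claim~\ref{clm_EtaLargeOrder}, and the claim follows. The step I expect to require the most care is the case $\mu_n=\mu^*$: there the ordinals attached to a single model degenerate to at most a singleton, so one genuinely needs the freedom to pick $Y$ with $\sup(Y\cap\omega_n)$ landing in the prescribed club; everything else is routine club-and-cofinality bookkeeping together with the elementarity already used to obtain \eqref{eq_WideTildeReg}.
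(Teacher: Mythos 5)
Your proof is correct and follows essentially the same route as the paper: the same case split on $\mu_n = \mu^*$ versus $\mu_n > \mu^*$, using $\sup(X \cap \omega_n)$ in the first case and the $\mu^*$-cofinal limit points of $\widetilde{D}^X_n$ above $\omega_{n-1}$ in the second, with the claim read (as the paper's own proof does) as ranging over all $X \in T''$. The only difference is cosmetic: you unpack the paper's appeal to ``the union over a stationary $T''$ is stationary in $\omega_n$'' by explicitly choosing $Y \in T''$ with the given club $C$ as an element, which is exactly the standard proof of that fact.
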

\begin{proof}
Note that $\mu_n \ge \mu^*$; we consider two cases, depending on whether this inequality is strict.  

If $\mu_n = \mu^*$ then $\text{sup}(X \cap \omega_n)$ is a $\mu^*$-cofinal limit of $\widetilde{D}^X_n$.\footnote{Possibly the only such limit of $\widetilde{D}^X_n$; i.e.\ in the case $\mu_n = \mu^*$, then at most nonstationarily many members of $\widetilde{D}^X_n$ are $\mu^*$-cofinal.}  Also, since $n \in I'$ then $\mu^* < \omega_{n-1}$, and so the cofinality of $\text{sup}(X \cap \omega_n)$ is strictly less than $\omega_{n-1}$.  Finally, note that 
\begin{equation*}
\bigcup_{X \in T''} \{ \text{sup}(X \cap \omega_n) \} 
\end{equation*}
is stationary in $\omega_n$, because $T''$ is stationary.

If $\mu_n > \mu^*$, then $Q^X_n:= \text{lim}(\widetilde{D}^X_n) \cap \text{cof}(\mu^*) \cap [\omega_{n-1},  \text{sup}(X \cap \omega_n))$ is stationary (in fact $\mu^*$-club) in $\text{sup}(X \cap \omega_n)$ for all $X \in T''$.  Also since $n \in I'$ then $\mu^* < \omega_{n-1}$.  It follows that every $\eta \in Q^X_n$ satisfies the assumptions of Claim \ref{clm_EtaLargeOrder}.  Finally, note that because each $Q^X_n$ is stationary in $\text{sup}(X \cap \omega_n)$ and $T''$ is stationary, it follows that  
\begin{equation*}
\bigcup_{X \in T''} \ \  Q^X_n 
\end{equation*}
is stationary in $\omega_n \cap \text{cof}(\mu^*)$, which completes the proof of the claim.
\end{proof}

Thus Claims \ref{clm_EtaLargeOrder} and \ref{clm_StatMany} imply that for any $n \in I'$, there are stationarily many $\eta < \omega_n$ such that $o^K(\eta) \ge \mu^*$.  This completes the proof of  Theorem \ref{thm_MainTheorem}.

\section{Stronger Hypotheses}\label{sec_StrongerHypotheses}

In this section we shall prove Theorems \ref{thm_MainTheorem_two},\ref{thm_MainTheorem_three} and \ref{thm_MainTheorem_four}. Let us start with Theorem \ref{thm_MainTheorem_two}. Let $l$,$m$,$\<S_n : n > m + 1\>$, $\<n_k: k < \omega\>$ be as in its statement. As before we can find a stationary set $T$ of $X \subseteq H_{\aleph_\omega}$ s.t $\sup(X \cap \aleph_n) \in S_n$ for all $n > m + 1$ and in the coiteration of $K_X$ with $K$, which we can assume to be linear in this case, the $K$-side of the iteration drops and the $K_X$-side is trivial.

As before iteration indices are cofinal in $\beta^X_\omega$ and hence $\theta_X$ is a limit for all $X$. So we can fix an $n^*$ such that whenever $\nu^X_i \geq \beta^X_{n^*}$ then there is no drop between $i$ and $\theta_X$. Also remember that whenever $k > n^*$ and $j < l$, then $\mu_{n_k} = \mu_{n_k + j}$. For such $i$ that $\mu^X_i \geq \beta^X_{n^*}$ let us call the degree of elementarity of $\pi^X_{i,i+1}$ at that point $m^*$ and let us refer to the - constant in $i$ - cofinality of $\rho_{m^*}(N^X_i)$ as $\lambda_X$. Then there exist infinitely many $k > n^*$ s.t. $\mu_{n_k} \neq \lambda_X$.

An important difference is that we can no longer prove iteration indices to be cofinal in $\beta_{n_k}$ even if $\mu_{n_k} \neq \lambda_X$. In fact, we will show that this is not the case! This is because our extenders might now have many generators. 

\begin{observation}\label{obs_succcof}
Let $k \geq m + 1$. Let $\alpha \in \left[\beta^X_{n_k},\beta^X_{n_k + l}\right)$ be s.t. $K^X \models \exists \gamma: \alpha = \gamma^+$. Then $\cof(\alpha) = \mu_{n_k}$.
\end{observation}

\begin{proof}
If $\alpha = \beta^X_{n_k + j}$ for some $j < l$ then this is by choice of our sequence. If not, then $\alpha$ is properly in between say $\beta^X_{n_k + j}$ and $\beta^X_{n_k + j + 1}$ and thus by weak covering $\cof(\sigma_X(\alpha)) = \aleph_{n_k + j}$. W.l.o.g $X$ is closed under some function witnessing this. But this easily gives $\cof(\alpha) = \cof(\beta^X_{n_k + j}) = \mu_{n_k + j} = \mu_{n_k}$.
\end{proof}

\begin{lemma}
 Let $k > n^*$ be s.t. $\mu_{n_k} \neq \lambda_X$. Then there exist an $i < \theta_X$ s.t. $\kappa^X_i  < \beta^X_{n_k} \leq \nu^X_i$.
\end{lemma}

\begin{proof}
Assume not. Because iteration indices are cofinal in $\beta^X_\omega$ there is some least $i$ s.t $\nu^X_i > \beta^X_{n_k}$. By assumption we have $\kappa^X_i \geq \beta^X_{n_k}$. Then by coherence and the fact that there is no drop in between $i$ and $\theta_X$ means that $((\beta^X_{n_k})^+)^{K_X} = ((\beta^X_{n_k})^+)^{M^X_i}$ is a regular cardinal of $N^X_i$. Furthermore, because by minimality of $i$ all generators of the iteration up to this point are less than $\beta^X_{n_k}$, $N^X_i$ is sound above $\beta^X_{n_k}$. Lastly, $((\beta^X_{n_k})^+)^{K_X} \leq \rho_{m}(N^X_i)$ because there is no drop at $i$. So, Lemma \ref{lem_teclemma} applies and gives us that $\cof(((\beta^X_{n_k})^+)^{K_X}) = \lambda_X$. On the other hand by Observation \ref{obs_succcof} $\cof(((\beta^X_{n_k})^+)^{K_X}) = \mu_{n_k}$. Contradiction!
\end{proof}

Note here that by the same proof we have that $\cof(((\kappa^X_i)^+)^{N^X_i}) = \lambda_X$ and thus it should be easy to see that $((\kappa^X_i)^+)^{N^X_i} < \beta^X_{n_k}$.

So for any $k > n^*$ s.t. $\mu_{n_k} \neq \lambda_X$ we can fix some $i^X_k$ with $\kappa^X_{i^X_k} < \beta^X_{n_k}$ and $\nu^X_{i^X_k} \geq \beta^X_{n_k}$. To simplify our notation we shall henceforth refer to $\kappa^X_{i^X_k}$ as $\eta^X_k$, to $\nu^X_{i^X_k}$ as $\zeta^X_k$, to the model $N^X_{i^X_k}$ as $M^X_k$ and to the extender $E^X_{i^X_k}$ as $F^X_k$.

\begin{lemma}\label{lemma_nuplus}
Let $k > n^*$ be s.t. $\mu_{n_k} \neq \lambda_X$. Then $((\zeta^X_k)^+)^{K_X} \geq \beta^X_{n_k + l}$.
\end{lemma}

\begin{proof}
Consider $M^* := \ult(M^X_k,F^X_k)$. In $M^*$, $((\zeta^X_k)^+)^{N^*}$, which equals $((\zeta^X_k)^+)^{K_X}$ by coherence, is certainly regular and $M^*$ is sound above $\zeta^X_k$. Notice also that $\rho_{m^*}(M^*) \geq ((\zeta^X_k)^+)^{M^*}$. So, by Lemma \ref{lem_teclemma} $\cof(((\zeta^X_k)^+)^{M^*}) = \lambda_X$. On the other hand $((\zeta^X_k)^+)^{K_X}$ is a successor. card. in $K_X$; if it were in the interval $\left[\beta^X_{n_k},\beta^X_{n_k + l}\right)$, by Observation \ref{obs_succcof} it's cofinality would equal $\mu_{n_k}$. So, we can conclude that $((\zeta^X_k)^+)^{K_X} \geq \beta^X_{n_k + l}$.
\end{proof}

%

We can immediately conclude that $\mitord^{K_X}(\eta^X_k) \geq \beta^X_{n_k + (l - 1)}$ for every  $k > n^*$ s.t. $\mu_{n_k} \neq \lambda_X$. By elementarity then $\mitord^K(\sigma_X(\eta^X_k)) \geq \aleph_{n_k + (l - 1)}$. Also, we know there exists infinitely many such $k$. So this concludes the proof of Theorem \ref{thm_MainTheorem_two}.

The proof gives a slightly stronger conclusion! 

\begin{fact}\label{thefanciestoffactums}
Assume that $\xi \in \left(\sigma_X(\eta^X_k),\aleph_{n_k + (l-1)}\right) \cap \card^K$, then $\mitord^K(\sigma_X(\eta^X_k)) \geq \xi$.
\end{fact}

We will need the above fact for the proof of Theorem \ref{thm_MainTheorem_three}:

\begin{proof}[Proof of Theorem \ref{thm_MainTheorem_three}]
We will do the proof for cofinalities $\aleph_1$ and $\aleph_2$, it is not hard to see that this case is representative. We just need to consider two sequences $\vec{S} := \<S^n_{f(n)} : n \geq 8\>$ and $\vec{T}:=\<S^n_{g(n)} : n \geq 4\>$ where 
\begin{align*}
  f(n) & = \begin{cases} 1 & n \mod 8 = 0,1,2,3 \\ 2 & n \mod 8 = 4,5,6,7 \end{cases} \\
  g(n) & = \begin{cases} 1 & n \mod 4 = 0,1 \\ 2 & n \mod 4 = 2,3 \end{cases}
\end{align*}
Assume both $\vec{S}$ and $\vec{T}$ are mutually stationary. Using Theorem \ref{thm_MainTheorem_two} we get a sequence $\<\kappa_n: n <\omega\>$ and $\<\lambda_n: n< \omega\>$ s.t. for all $n < \omega$ there exists $\kappa < \kappa_n$ and $\kappa' < \lambda_n$ with $\mitord^K(\kappa) \geq \kappa^{+3}_n$ and $\mitord^K(\kappa') \geq \lambda^+_n$.

As to the identity of the $\kappa_n$'s they are the $\aleph_k$'s with either $k$ at least some number $n^*$ and $k \mod 8 = 0$ or $k \geq n^*$ and $k \mod 8 = 4$. Similarly, the $\lambda_n$'s are the $\aleph_k$'s with either $k$ at least some number $n^*$ and $k \mod 4 = 0$ or $k \geq n^*$ and $k \mod 4 = 2$. Of course, we can assume the two $n^*$'s to be the same.

Our job is now to simply check all of the 4 possible combinations and see that there must be some overlap on the $K$-sequence. By symmetry it suffices to examine just two of those cases.

 Take some $k$ big enough with $k \mod 8 = 0$. Assume there is some $\kappa < \aleph_k$ with $\mitord^K(\kappa) \geq \aleph_{k+3}$. The first case we look at is that there is $\kappa' < \aleph_{k + 2}$ with $\mitord^K(\kappa') \geq \aleph_{k + 3}$. This then tells us that there must be some other $\kappa'' < \aleph_{k - 2}$ with $\mitord^K(\kappa'') \geq \aleph_{k - 1}$. In our situation we have that $\kappa < \aleph_k$ is a regular cardinal in $K$ thus by Fact \ref{thefanciestoffactums} we have that $\mitord^K(\kappa'') \geq \kappa$. If $\nu$ was the index of the order zero measure on $\kappa$ then $K \vert\vert \nu$ is a $0^\P$ type mouse. 

The other case works similar. Assume now that $\kappa' < \aleph_k$ with $\mitord^K(\kappa') \geq \aleph_{k+1}$ exists. Then we also have $\kappa'' < \aleph_{k+4}$ measurable in $K$. As before - but applying Fact \ref{thefanciestoffactums} at $\kappa$ instead - we actually have $\mitord^K(\kappa) \geq \kappa''$ and thus $0^\P$.

As mentioned before, the remaining two cases are dealt with by a symmetric argument.
\end{proof}

Finally, the proof of theorem \ref{thm_MainTheorem_four}:

So let us fix $ 1 < m < \omega$ and a mutually stationary sequence $\vec{S}:= \<S_n: n > m\>$ s.t for all $n$, $S_n$ concentrates on a fixed cofinality $\mu_n$ s.t. $\liminf\limits_{m < n < \omega} \mu_n = \aleph_\omega$. It is easy to see that we can require all the $\mu_n$ to be uncountable.

We shall do the following proof in greater generality. The above hypothesis is almost certainly very strong, close to inconsistent even. We believe it should be possible to extract an inner model with a Woodin cardinal from the hypothesis. Considering that the consistency of the statement is unsure, it might not be a worthy pursuit to do so. 

We assume for a contradiction:

\begin{itemize}
 \item[(a)] $K$ is a core model satisfying weak covering at all but finitely many cardinals;
 \item[(b)] if $E$ is a total extender on the $K'$-sequence where $K' \eextend K$, $\kappa$ is it's critical point and $\nu$ it's index, then $\nu$ is a successor cardinal in $\ult(K';E)$ and $\cof((\kappa^+)^{K'}) = \cof(\nu)$;
 \item[(c)] there exists some $X \prec (H_{\omega};\in, K \cap H_\omega,\ldots)$ s.t. $\sup(X \cap \aleph_n) \in S_n$ for all $m < n <\omega$ and in the co-iteration of $K$ and $K_X$ which is not necessarily linear, $K_X$ does not move and $K$ drops along its main branch.
\end{itemize}

The above is satisfied if $0^\P$ does not exist as evidenced by the core model below $0^\P$(, except (b) which is not quite true, but we can make do by substituting $(\nu^+)^{\ult(K',E)}$ for $\nu$, the former does have the right cofinality as shown in the proof of Lemma \ref{lemma_nuplus}). We do not know if it is satisfied if there is no inner model with a Woodin cardinal.

So, let us write $\mathcal{T}_X$ for the iteration tree on $K$ and $b_X$ for its main branch from assumption (c). Let $\theta_X$ be the length of $\mathcal{T}_X$, $\<N^X_i,\kappa^X_i,\nu^X_i,m^X_i: i < \theta_X\>$ be the iteration's models, critical points, indices and degrees.  As before we can show that $b_X$ has limit type. So there is some $n^*$ s.t whenever $\nu^X_i \geq \beta^X_{n^*}$ and $i \in b$ then $\cof(\rho_{m^X_i}(N^X_i))$ is constant in $i$. Call this constant value $\lambda_X$. W.l.o.g. $\lambda_X < \mu_n$ for all $n > n^*$.

\begin{observation}\label{obs_succcof_two}
Let $n > n^*$. Let $\alpha \in [\beta^X_n,\beta^X_{n+1})$ be s.t. $K^X \models \exists \gamma :\alpha = \gamma^+$. Then $\cof(\alpha) > \lambda_X$.
\end{observation}

\begin{proof}
If $\alpha = \beta^X_{n+1}$ then this is by choice of our sequence. If not, then by weak covering $\cof(\sigma_X(\alpha)) = \aleph_n$. W.l.o.g $X$ is closed under some function witnessing this. But this easily gives $\cof(\alpha) = \cof(\beta^X_n) = \mu_n > \lambda_X$.
\end{proof}

We can now derive a contradiction finishing the proof of theorem \ref{thm_MainTheorem_four}:

Let $i + 1 \in b$ be s.t. $\nu^X_i \geq \beta^X_{n^*}$. On the one hand we have that $\nu^X_i$ is a successor cardinal of $K_X$. Thus by Observation \ref{obs_succcof_two} $\cof(\nu^X_i) > \lambda_X$. 

On the other hand by assumption (b) $\cof(\nu^X_i) = \cof(((\kappa^X_i)^+)^{M^X_i})$; furthermore, $((\kappa^X_i)^+)^{M^X_i} = ((\kappa^X_i)^+)^{M^X_j}$ because of agreement between models in iteration trees and, crucially, the fact that there occur no more drops on $b_X$ from this stage on. (Here $j$ is the $\mathcal{T}^X$-predecessor of $i + 1$). Clearly though, $((\kappa^X_i)^+)^{M^X_j}$ is a regular cardinal of $M^X_j$, and that model is sound above $\kappa^X_i$. So, Lemma \ref{lem_teclemma} applies and gives $\cof(((\kappa^X_i)^+)^{M^X_i}) = \lambda_X$. Hence $\cof(\nu^X_i) = \lambda_X$. Contradiction!

\section{Open Problems}\label{sec_OpenProblems}

\begin{question}
 Is it possible to force, starting from a model with at most finitely many measurable cardinals, that the sequence $\<S^2_0,S^3_1,S^4_1,S^5_0,S^6_1,S^7_1,\ldots\>$ is mutually stationary?
\end{question}

\begin{question}
 What is the upper bound for the existence of a mutually stationary sequence satisfying the hypothesis of Theorem \ref{thm_MainTheorem_two}?
\end{question}

\begin{question}
 Is the hypothesis of Theorem \ref{thm_MainTheorem_three} consistent relative to large cardinals?
\end{question}

\begin{question}
 Does ``$\aleph_\omega$ is Jonsson" imply that there exists in a - possibly trivial - forcing extension $V\left[G\right]$ a mutually stationary sequence satisfying the hypothesis of Theorem \ref{thm_MainTheorem_four} relative to $V\left[G\right]$?
\end{question}

\begin{question}\label{question_5}
 Is it possible to generate mutually stationary sequences not coverded by Theorem \ref{thm_LiuShelah_1}, e.g. the sequence $\<S^4_1,S^5_1,S^6_2,S^7_2,S^8_1,S^9_1,S^10_2,S^11_2,\ldots\>$, using Theorem \ref{PCF}, i.e. is it possible to have $\cof(\prod\limits_{n \in A_0} \aleph_n) = \aleph_{\omega + 1}$ and $\cof(\prod\limits_{n \in A_1} \aleph_n) = \aleph_{\omega + 2}$ where $A_0 := \{ n < \omega\vert n \mod 4 = 0,1\}$ and $A_1 := \{ n < \omega \vert n \mod 4 = 2,3\}$ or vice versa?
\end{question}

\begin{bibdiv}
\begin{biblist}
\bibselect{Bibliography}
\end{biblist}
\end{bibdiv}

\end{document}